\NewDocumentCommand\dyn{mom}
{\(\IfStrEq{#3}{}{#1_{n}}{#1_{#3}}^{\IfValueT{#2}{#2}}\) 
& \IfValueTF{#2}{\dynkin{#1}[#2]{#3}}{\dynkin{#1}{#3}} \\ }
\newcommand{\Q}{\mathbb{Q}}
\newcommand{\R}{\mathbb{R}}
\newcommand{\op}{\operatorname}
\newtheorem{theorem}{Theorem}[section]
\newtheorem{remark}[theorem]{Remark}
\newtheorem{corollary}[theorem]{Corollary}
\newtheorem{proposition}[theorem]{Proposition}
\newtheorem{lemma}[theorem]{Lemma}
\newtheorem{definition}[theorem]{Definition}
\newtheorem{definition/lemma}[theorem]{Definition/Lemma}
\newtheorem{notation}{Notation}
\title[Rays of Kostka cones]{Vertices of Intersection Polytopes and Rays of Generalized Kostka Cones}
\author{Marc Besson}
\address{Department of Mathematics, University of North Carolina, Chapel Hill, NC 27599}
\email{marmarc@live.unc.edu}
\author{Sam Jeralds}
\address{Department of Mathematics, University of North Carolina, Chapel Hill, NC 27599}
\email{sjj280@live.unc.edu}
\author{Joshua Kiers}
\address{Department of Mathematics, Ohio State University, 281 W Lane Ave, Columbus, OH 43201}
\email{kiers.2@osu.edu}
\begin{document}
\maketitle

\begin{abstract} 
Let $\mathscr{K}(G)$ be the rational cone generated by pairs $(\lambda, \mu)$ where $\lambda$ and $\mu$ are dominant integral weights and $\mu$ is a nontrivial weight space in the representation $V_{\lambda}$ of $G$. We produce all extremal rays of $\mathscr{K}(G)$ by considering the vertices of corresponding intersection polytopes $IP_{\lambda}$, the set of points in $\mathscr{K}(G)$ with first coordinate $\lambda$. We show that vertices of $IP_{\varpi_i}$ arise as lifts of vertices coming from cones $\mathscr{K}(L)$ associated to simple Levi subgroups possessing the simple root $\alpha_i$. As corollaries we obtain a complete description of all extremal rays, as well as polynomial formulas describing the numbers of extremal rays depending on type and rank.
\end{abstract}

\section{Introduction}
 Let $G$ be a simple, simply-connected linear algebraic group over $\mathbb{C}$. We choose a maximal torus and Borel subgroup $T \subset B \subset G$. Then the irreducible finite-dimensional representations of $G$ are indexed by the dominant integral weights of $T$. For such a weight $\lambda$, the associated representation, $V_\lambda$, possesses a weight space decomposition with respect to $T$: $V_\lambda = \oplus V_\lambda(\mu)$, where $V_\lambda(\mu)$ is the subspace upon which $T$ acts through scalar multiplication by the character $\mu$. 
 
As is well-known, $V_\lambda(\mu)\ne (0)$ if and only if 
\begin{enumerate}[label=(\alph*)]
\item $\lambda-\mu$ lies in the root lattice for $G$ and
\item $\mu$ is contained inside the {\it Weyl polytope} $\op{conv}(W\cdot \lambda)$,
\end{enumerate}
where $W$ denotes the Weyl group and $W\cdot \lambda$ its orbit through $\lambda$. However, there is a simpler criterion if $\mu$ is already known to be a dominant weight (cf. \cite{St}): for $\lambda,\mu$ both dominant, $V_\lambda(\mu)\ne (0)$ if and only if 
\begin{enumerate}
\item [(a$'$)]$\lambda - \mu$ is a linear combination of simple roots with nonnegative integral coefficients. 
\end{enumerate}
Since $\dim V_\lambda(\mu) = \dim V_\lambda(w\mu)$ for any $w\in W$, restricting our attention to $\mu$ dominant does not, in fact, lose any information about the representation $V_\lambda$. It is customary to write $\lambda \succeq \mu$ for the statement $V_\lambda(\mu)\ne (0)$ and to call $\succeq$ the {\it dominance order.}

\subsection{Rays of the Kostka cone}

In type $A$, the multiplicity $m_{\lambda,\mu} = \dim V_\lambda(\mu)$ has classically been called a {\it Kostka number} and has meaning in a variety of contexts, such as symmetric functions and representations of the symmetric group, cf. \cite{Ful}.
We set $\mathscr{K}(G) = \{(\lambda,\mu) | \lambda,\mu \text{ dominant, } V_\lambda(\mu)\ne (0)\}$ and call $\mathscr{K}(G)$ the {\it Kostka cone} for $G$. In \cite{GKOY}, S. Gao, G. Orelowitz, A. Yong, and the third author completely describe the extremal rays of $\mathscr{K}(GL_n)$ using the language of partitions and Young tableaux and make further investigations into the Hilbert basis of the Kostka cone. Inspired by that work, we now seek to extend one of their results:
$$
\text{ {\bf Question:} What are the extremal rays of $\mathscr{K}(G)$, for $G$ simple and simply-connected?}
$$

Let us ensure that the above question makes sense.
If $X^*(T)$ denotes the space of all weights of $T$, criterion (a$'$) tells us that $\mathscr{K}(G)\subset (X^*(T))^2$ is exactly the solution space to a system of linear inequalities which we list in Proposition \ref{steinberg}.
In the ambient vector space $(X^*(T)\otimes \Q)^2$, the set $\mathscr{K}(G)_\Q$ of {\it rational} solutions to 
those inequalities thus forms a pointed, rational, polyhedral cone, and it makes sense to ask what its extremal rays are.

Consider momentarily the related cone $\mathscr{K}'(G) = \{(\lambda,\mu) | \lambda \text{ dominant, } V_\lambda(\mu)\ne (0)\}$. The two cones are related by $\mathscr{K}'(G) = W \mathscr{K}(G)$, where $W$ acts on the second factor only: $w.(\lambda,\mu) = (\lambda,w\mu)$. Given a dominant weight $\lambda$, the affine slice $\{(\lambda,\cdot)\}\cap \mathscr{K}'(G)_\Q$ is simply $\op{conv}(W\cdot \lambda)$. Since the vertices of $\op{conv}(W\cdot \lambda)$ depend linearly on $\lambda$, a straightforward argument (cf. Propositions \ref{somerays} and \ref{firstcoord}) shows that the extremal rays of $\mathscr{K}'(G)$ consist of the various $(\varpi_i,w\varpi_i)$ as $\varpi_i$ ranges over the set of fundamental weights and $w$ over $W$. 
 
The extremal rays of $\mathscr{K}(G)$ cannot simply be determined from those of $\mathscr{K}'(G)$, even though the former is naturally a quotient of the latter by $W$ (the problem is that this quotient is not operation-preserving at the level of semigroups.) 
Nonetheless, the technique for $\mathscr{K}'(G)$ can be used for $\mathscr{K}(G)$. 
Given $\lambda$, the affine slice $\{(\lambda,\cdot)\}\cap\mathscr{K}(G)_\Q$ is some convex polytope $IP_\lambda$; it can be viewed as the intersection of $\op{conv}(W\cdot\lambda)$ with the dominant chamber of $X^*(T)_\Q$. Examining these polytopes, we find an averaging formula for the vertices of $IP_\lambda$ (Proposition \ref{avg}), thus establishing that they depend linearly on $\lambda$, which allows us to conclude:

\begin{theorem}
The extremal rays of $\mathscr{K}(G)_\Q$ are all of the form $(\lambda,\mu)$ where $\lambda = \varpi_j$ is a fundamental weight and $\mu$ is a vertex of $IP_{\varpi_j}$. Conversely, every such pair $(\lambda,\mu)$ produces an extremal ray of $\mathscr{K}(G)_{\mathbb{Q}}$. 
\end{theorem}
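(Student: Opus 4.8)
The plan is to realize the cone concretely as $\mathscr{K}(G)_\Q = \{(\lambda,\mu) : \lambda \in C^+,\ \mu \in IP_\lambda\}$, where $C^+ \subseteq X^*(T)_\Q$ is the dominant cone — a simplicial cone whose extremal rays are $\Q_{\geq 0}\varpi_1,\dots,\Q_{\geq 0}\varpi_n$ — and then to imitate the argument used for $\mathscr{K}'(G)$ in Propositions \ref{somerays} and \ref{firstcoord}, now with the polytope $IP_\lambda$ playing the role of $\op{conv}(W\cdot\lambda)$. Throughout I will use that $\mathscr{K}(G)_\Q$ is a pointed rational polyhedral cone (Proposition \ref{steinberg}) and that $IP_{t\lambda} = t\cdot IP_\lambda$ for $t\geq 0$, which is immediate from the cone structure.

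For the converse statement I would argue purely formally. Suppose $(\varpi_j,\mu)$, with $\mu$ a vertex of $IP_{\varpi_j}$, decomposes as $(\lambda_1,\mu_1)+(\lambda_2,\mu_2)$ with each $(\lambda_i,\mu_i)\in\mathscr{K}(G)_\Q$ and $\lambda_i\neq 0$. Since $\lambda_1+\lambda_2 = \varpi_j$ lies on an extremal ray of the simplicial cone $C^+$, each $\lambda_i$ equals $t_i\varpi_j$ with $t_i>0$, $t_1+t_2=1$; then $\mu_i\in IP_{t_i\varpi_j}=t_i\cdot IP_{\varpi_j}$, so writing $\mu_i=t_i\nu_i$ presents $\mu=t_1\nu_1+t_2\nu_2$ as a convex combination of points of $IP_{\varpi_j}$. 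As $\mu$ is an extreme point, $\nu_1=\nu_2=\mu$, so $(\lambda_i,\mu_i)=t_i(\varpi_j,\mu)$ and the ray is extremal.

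For the forward direction, let $\rho=\Q_{\geq 0}(\lambda,\mu)$ be an extremal ray. First I would check that $\mu$ is a vertex of $IP_\lambda$: if not, then $\mu=\tfrac12(\mu'+\mu'')$ for some $\mu'\neq\mu''$ in $IP_\lambda$, so $(\lambda,\mu)$ is the midpoint of the distinct points $(\lambda,\mu')$ and $(\lambda,\mu'')$ of $\mathscr{K}(G)_\Q$; extremality forces both onto $\rho$, and equating their (nonzero, equal) first coordinates forces $\mu'=\mu''$, a contradiction. The key remaining step is to show $\lambda$ is proportional to some $\varpi_j$. Here I would invoke Proposition \ref{avg}: each vertex of $IP_\nu$ is one of finitely many linear functions $v_1(\nu),\dots,v_N(\nu)$ of the base $\nu$ (each an average of the $w\nu$, $w$ ranging over a fixed subset of $W$), with $v_k(\nu)\in IP_\nu$ for every dominant $\nu$. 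Choose $k$ with $\mu=v_k(\lambda)$. If $\lambda$ is not on an extremal ray of $C^+$, then — using simpliciality of $C^+$ — we may split $\lambda=\lambda_1+\lambda_2$ with $\lambda_1,\lambda_2\in C^+$ nonzero and $\lambda_1$ not a multiple of $\lambda$; linearity of $v_k$ then gives $(\lambda,\mu)=(\lambda_1,v_k(\lambda_1))+(\lambda_2,v_k(\lambda_2))$ with both summands in $\mathscr{K}(G)_\Q$ and the first not a multiple of $(\lambda,\mu)$, contradicting extremality. Hence $\lambda=t\varpi_j$, and after rescaling $\rho$ is spanned by $(\varpi_j,t^{-1}\mu)$ with $t^{-1}\mu$ a vertex of $IP_{\varpi_j}$.

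The crux — and the place where Proposition \ref{avg} does the real work — is justifying ``the vertices of $IP_\lambda$ depend linearly on $\lambda$''. The combinatorial type of $IP_\lambda$, which is the intersection of the Weyl polytope $\op{conv}(W\cdot\lambda)$ with the dominant chamber, can jump as $\lambda$ crosses walls of a fan subdividing $C^+$, so a priori a given vertex is governed by a linear formula only on a subcone of bases. The averaging description circumvents this: it realizes each such formula as a genuine average over a fixed subset of $W$, which is defined on all of $C^+$ and continues to take values inside $IP_\nu$ there; this is exactly what makes the split $\lambda=\lambda_1+\lambda_2$ above legitimate. (Were only a piecewise-linear statement available, one would instead perturb $\lambda$ symmetrically, as $\lambda\pm\epsilon u$, into the two cones adjacent to a wall, the averaging formula again guaranteeing the perturbed points lie in $\mathscr{K}(G)_\Q$, and conclude by the same midpoint identity.) The remaining ingredients — simpliciality of $C^+$, extreme-point manipulations, and homogeneity of $IP_\lambda$ — are routine.
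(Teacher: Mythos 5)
Your proposal is correct and takes essentially the same route as the paper: your converse argument is precisely that of Proposition \ref{somerays} (simpliciality of the dominant chamber plus the extreme-point property of the vertex of $IP_{\varpi_j}$), and your forward argument, like Proposition \ref{firstcoord}, hinges on the classification of vertices as the $v_{I,J}(\lambda)$ together with the averaging formula of Proposition \ref{avg}, i.e.\ linearity of the vertices in $\lambda$. The only cosmetic difference is that you first pin down $\mu$ as a vertex via a midpoint argument and then split only $\lambda$, whereas the paper expands $(\lambda,\mu)$ in one step over both the convex decomposition of $\mu$ into vertices and the decomposition $\lambda=\sum b_k\varpi_k$.
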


Thus in order to completely describe the set of extremal rays of $\mathscr{K}(G)_{\mathbb{Q}}$, it is sufficient for us to enumerate the vertices of all of the intersection polytopes $IP_{\varpi_j}$ for all fundamental weights $\varpi_j$ of $T$. We complete this work in Sections \ref{desc} and \ref{enum}, with the following result. 

\begin{theorem}\label{maine}
The vertices of $IP_{\varpi_j}$ consist of the following: 
\begin{enumerate}[label=(\alph*)]
\item $\varpi_j$, and 
\item $\varpi_j - \sum_I c_i\alpha_i$, where $I$ stands for a connected subdiagram of the Dynkin diagram for $G$ containing node $j$ and the coefficients $c_i$ are the entries of the $j^\text{th}$ column of the inverse transpose of the Cartan matrix associated to $I$.
\end{enumerate}
\end{theorem}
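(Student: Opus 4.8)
The plan is to pass to the linear coordinates $x=(x_i)$ defined by $\mu=\varpi_j-\sum_i x_i\alpha_i$, in which $IP_{\varpi_j}$ becomes a polyhedron whose vertices I can enumerate directly. Writing $C_{ik}=\langle\alpha_i,\alpha_k^\vee\rangle$ for the Cartan matrix, criterion (a$'$) together with Proposition~\ref{steinberg} identifies $IP_{\varpi_j}$ with
\[
P_j \;=\; \Bigl\{\, x\in\R^n \ :\ x_i\ge 0\ \text{for all }i,\quad \textstyle\sum_i C_{ik}\,x_i\le \delta_{jk}\ \text{for all }k \,\Bigr\},
\]
the first family of inequalities encoding $\mu\preceq\varpi_j$ and the second the dominance of $\mu$ (compactly, $C^\top x\le e_j$ and $x\ge 0$). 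For a set of nodes $I$, write $C_I$ for the principal submatrix on $I$, the Cartan matrix of the subdiagram $I$; I will use repeatedly that $C_I$ is invertible, that $C_{ik}\le 0$ for $i\ne k$, and that $C_I^{-1}$ has strictly positive entries when $I$ is connected.

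Next I would check that the points listed in the statement are indeed vertices of $P_j$. Apart from $x=0$ (the point $\mu=\varpi_j$), for each connected subdiagram $I$ containing node $j$ set $x^{(I)}$ to be supported on $I$ with $x^{(I)}_I=(C_I^\top)^{-1}e_j^{(I)}$, where $e_j^{(I)}$ is the $j$-th standard vector of $\R^I$. Since $(C_I^\top)^{-1}=(C_I^{-1})^\top$ has positive entries, $x^{(I)}$ is strictly positive exactly on $I$. Feasibility is a one-line sign check: the constraints indexed by $k\in I$ hold with equality by construction, and for $k\notin I$ we have $\sum_{c\in I}C_{ck}x^{(I)}_c\le 0=\delta_{jk}$ because the pairings $C_{ck}$ with $c\in I,\ k\notin I$ are $\le 0$ and $k\ne j$. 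It is a vertex because the $n$ active constraints $\{x_i=0:i\notin I\}$ and $\{\sum_i C_{ik}x_i=\delta_{jk}:k\in I\}$ are independent — the first block spans $\R^{I^c}$, and the gradients of the second, restricted to $\R^I$, are the rows of the invertible matrix $C_I$. Distinct connected $I$'s give points of distinct support, so the listed vertices are pairwise distinct and different from $\varpi_j$. Translating back, $x^{(I)}$ corresponds to $\varpi_j-\sum_{i\in I}c_i\alpha_i$ with $c_i=(C_I^{-1})_{ji}$ — the $i$-th entry of the $j$-th column of $(C_I^\top)^{-1}$, which is part (b). (As a sanity check, one can verify $\mu_{x^{(I)}}=\tfrac{1}{|W_I|}\sum_{w\in W_I}w\varpi_j$, the $W_I$-average, consistent with the shape of Proposition~\ref{avg}.)

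The substantive step is the converse: every vertex of $P_j$ is one of these. Let $v$ be a vertex with support $I=\{i:v_i>0\}$, and let $K=\{k:\sum_i C_{ik}v_i=\delta_{jk}\}$ record the active dominance constraints. The key device is a sign lemma: for $k\in K$ with $k\notin I$, the identity $\delta_{jk}=\sum_{c\in I}C_{ck}v_c$ exhibits $\delta_{jk}$ as a sum of nonpositive terms, so when $k\ne j$ it forces $C_{ck}=0$ for every $c\in I$ (the gradient of that constraint then vanishes on $\R^I$), while $k=j$ — which presupposes $j\notin I$ — is impossible, as it would write $1$ as a sum of nonpositive numbers. Since the active constraints of $v$ span $\R^n$ while $\{x_i=0:i\notin I\}$ spans only $\R^{I^c}$, the gradients of the constraints in $K$ must span $\R^I$ after projecting away $\R^{I^c}$; by the sign lemma those projections vanish for $k\in K\setminus I$, so the at most $|I|$ rows of $C_I$ coming from $k\in K\cap I$ already span $\R^I$, forcing $K\supseteq I$ and $C_I^\top v_I=e_j^{(I)}$. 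Two consequences follow. First, if $j\notin I$ the right-hand side is $0$, so $v_I=0$ and $v=0$; hence a nonzero vertex has $j\in I$. Second, if $I$ had a connected component $I_2$ not containing $j$, then restricting $C_I^\top v_I=e_j^{(I)}$ to the rows indexed by $I_2$ — where the cross-terms with the rest of $I$ vanish, there being no edges — gives $C_{I_2}^\top v_{I_2}=0$, whence $v_{I_2}=0$, contradicting $v_{I_2}>0$; hence $I$ is connected. Finally $C_I^\top v_I=e_j^{(I)}$ says precisely $v=x^{(I)}$, completing the enumeration.

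I expect the main obstacle to be exactly this last paragraph: the careful accounting of which constraints can be active at a vertex. Isolating the sign lemma and combining it with a rank count is what simultaneously forces $j\in I$, the connectedness of $I$, and the identification $v=x^{(I)}$; by contrast the coordinate change, the feasibility checks, and recognizing the coefficients $c_i$ as a column of the inverse transpose Cartan matrix are all routine.
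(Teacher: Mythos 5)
Your proof is correct, but it follows a genuinely different route from the paper. The paper never works in root coordinates for a fixed fundamental weight: it first proves a statement for arbitrary dominant $\lambda$, namely that the vertices of $IP_\lambda$ are exactly the points $v_{I,J}$ cut out by the facets $F^\lambda_i$, $E^\lambda_j$ with $I\sqcup J=\{1,\dots,r\}$ (Theorem \ref{verticestheorem}, via the basis statement of Proposition \ref{basis} and the Lusztig--Tits positivity lemma), then establishes the averaging formula $v_{I,J}(\lambda)=\frac{1}{|W_I|}\sum_{w\in W_I}w\lambda$ (Proposition \ref{avg}), uses it to collapse $I$ to its connected component containing $j$ when $\lambda=\varpi_j$ (Lemmas \ref{mt} and \ref{simpleton}), proves distinctness by a highest-root reflection argument, and only identifies the coefficients $c_i$ with a column of the inverse transpose Levi Cartan matrix at the end, through the Levi induction of Section \ref{levilifting}. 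You instead specialize to $\lambda=\varpi_j$ from the start, pass to the coordinates $\mu=\varpi_j-\sum x_i\alpha_i$ so that $IP_{\varpi_j}$ becomes $\{x\ge 0,\ C^{\top}x\le e_j\}$, and run a direct linear-programming classification of vertices by support: your sign lemma plus the rank count simultaneously forces $j\in I$, connectedness of the support, and $v_I=(C_I^{\top})^{-1}e_j^{(I)}$, so the Cartan-matrix description falls out immediately rather than being deduced afterwards. Both arguments ultimately rest on the same classical input (strict positivity of the inverse Cartan matrix of a connected diagram, the paper's Lemma \ref{prakash}). What your approach buys is a shorter, self-contained proof of Theorem \ref{maine} with the explicit coefficient formula built in; what the paper's approach buys is the general-$\lambda$ vertex theorem and the linearity of $v_{I,J}(\lambda)$ in $\lambda$ (Corollary \ref{linear}), which your argument does not produce but which the paper needs independently for Proposition \ref{firstcoord} and the classification of all extremal rays.
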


\begin{remark}
It is natural to consider (a) as a special case of (b) where $I$ is the empty subdiagram. 
\end{remark}

\subsection{Two examples}

First take $G=SL_2$. For nonnegative integers $\ell,m$, the weight space $V_\ell(m)\ne (0)$ if and only if $\ell\ge m$ and $\ell-m$ is even. So the cone $\mathscr{K}(SL_2)$ looks like this:
\begin{center}
\begin{tikzpicture}[>=triangle 45]
\fill [lightgray] (0,0) -- (3,3) -- (3,0) -- (0,0);
\draw [->] (-0.5,0) -- (3,0);
\draw [->] (0,-0.5) -- (0,3);
\draw [thick, ->] (0,0) -- (1,0);
\draw [thick,->] (0,0) -- (1,1);
\draw [dashed ] (1,-0.5) -- (1,2);
\draw [-] (0,0) -- (3,3);
\draw [-] (-0.05,1) -- (0.05,1);
\node at (3.2, 0) {$\ell$};
\node at (0, 3.2) {$m$};
\node at (-0.2,1) {$1$};
\end{tikzpicture}
\end{center}
Notice that, intersecting the cone with the dashed line through $\ell = 1$, we obtain the polytope isomorphic to the interval $[0,1]$, whose two vertices give extremal rays as depicted. 


Now take $G = Sp_2$ (type $C_2$). The intersection polytope $IP_\lambda$ for $\lambda = \varpi_1+\varpi_2$ has $4=2^2$ vertices as predicted by Remark \ref{regularremark}.

\begin{center}
\begin{tabular}{cc}
\begin{tikzpicture}[scale=1.6,>=triangle 45]
\fill [lightgray] (0,0) -- (0,1.5) -- (0.5,1.5) -- (1,1) -- (0,0);
\draw[->] (0,0) -- (1,0);
\draw[->] (0,0) -- (-1,1);
\draw[thick, ->] (0,0) -- (0.5,0.5);
\draw[thick, ->] (0,0) -- (0,1);
\draw[-] (0,0) -- (1.5,1.5);
\draw[-] (0,0) -- (0,2); 
\draw[-] (0,1.5) -- (0.5,1.5);
\draw[-] (0.5,1.5) -- (1,1);
\node at (1.3,0) {$\alpha_1$};
\node at (-1.2,1.2) {$\alpha_2$};
\fill (0,0) circle [radius = 0.05];
\fill (0,1.5) circle [radius = 0.05];
\fill (0.5,1.5) circle [radius = 0.05];
\fill (1,1) circle [radius = 0.05];
\node at (0,2.3) {$\lambda = \varpi_1+\varpi_2$};
\end{tikzpicture} & 
\begin{tikzpicture}[scale=1.6,>=triangle 45]
\fill [lightgray] (0,0) -- (0,0.5) -- (0.5,0.5) -- (0,0);
\draw[->] (0,0) -- (1,0);
\draw[->] (0,0) -- (-1,1);
\draw[thick, ->] (0,0) -- (0.5,0.5);
\draw[thick, ->] (0,0) -- (0,1);
\draw[-] (0,0) -- (1.5,1.5);
\draw[-] (0,0) -- (0,2); 
\draw[-] (0,0.5) -- (0.5,0.5);
\node at (1.3,0) {$\alpha_1$};
\node at (-1.2,1.2) {$\alpha_2$};
\fill (0,0) circle [radius = 0.05];
\fill (0,0.5) circle [radius = 0.05];
\fill (0.5,0.5) circle [radius = 0.05];
\node at (0,2.3) {$\lambda = \varpi_1$};
\end{tikzpicture}
\end{tabular}
\end{center}

In contrast, the polytope $IP_{\varpi_1}$ has the three vertices $\varpi_1$, $\frac{1}{2}\varpi_2 = \varpi_1 - \frac{1}{2}\alpha_1$, and $0 = \varpi_1 - \frac{1}{2}(2\alpha_1 +\alpha_2)$; note that $\frac{1}{2}(1)$ and $\frac{1}{2}\left(\begin{array}{c}2 \\ 1\end{array}\right)$ are indeed the corresponding columns of the inverse transposes of the Levi Cartan matrices. 



\subsection{Levi induction}

To each vertex $v$ of an intersection polytope $IP_\lambda$, we assign a Levi subgroup $L\subseteq G$ and show that $v$ can be lifted from a corresponding vertex in an intersection polytope for $L$. This lifting procedure induces a map $\op{Ind}:\mathscr{K}(L)\to \mathscr{K}(G)$ that takes extremal rays to extremal rays. We describe this procedure in Section \ref{levilifting}. It is tempting to compare this to the induction of extremal rays for the eigencone from Levi subgroups described in \cite{Belkale,BKiers}. 


\subsection{Formulas for the numbers of extremal rays}

We also count the number of extremal rays of $\mathscr{K}(G)$ and produce, for each Lie type, a polynomial formula as a function of the rank. As a consequence of Theorem \ref{maine}, these formulas do {\it not} depend on the lacing of the associated Dynkin diagrams, so there are only three cases to consider: types $A_r$, $D_r$, and $E_r$. We also point out that these counting polynomials each begin with leading term $r^3/6$. See Remark \ref{ADE}. 

\subsection{Acknowledgements} This paper owes its existence to A. Yong, who posed this question for $GL_n$ and studied it in \cite{GKOY}. We thank A. Yong and P. Belkale for providing useful feedback on an earlier version of this manuscript.

\section{Notation and Background}

We fix $G$ a simple, simply-connected linear algebraic group over $\mathbb{C}$. We choose a maximal torus and Borel subgroup $T \subset B \subset G$. We denote by $X^*(T)$ the lattice of weights of $T$, and by $X_*(T)$ the lattice of coweights. Their natural pairing is denoted by $\langle \, , \rangle$. We let $\Phi$ denote the set of  roots of $G$ with respect to $T$, and denote by $\Phi^+$ the set of positive roots of $G$ with respect to $B$.  We let $\Phi^{\vee}$ denote the set of coroots, so $(\Phi, X^*(T), \Phi^{\vee}, X_*(T))$ is a root datum for $G$. For a subset $I$ of $\{1,\hdots,r\}$, denote by $L_I$ the semisimple part of the corresponding Levi subgroup, where $\alpha_i \in \Phi(L_I)$ for all $i \in I$. We write $\Lambda^+$ for the set of dominant weights. We denote by $W$ the Weyl group of $G$. We denote by $\Delta=\{\alpha_1, \dots \alpha_r\}$ (resp., $\{\alpha^{\vee}_1, \dots \alpha^{\vee}_r\} $) the set of simple roots in  $\Phi$ (resp., simple coroots in $\Phi^{\vee}$). The fundamental coweights $x_i\in X_*(T)\otimes \Q$ satisfy $\langle \alpha_i, x_j \rangle = \delta_{ij}$. 
 
 If $\lambda \in \Lambda^+$, we write $V_{\lambda}$ for the irreducible representation of $G$ of highest weight $\lambda$. If $\mu \in X^*(T)$ we write $V_{\lambda}(\mu)$ for the subspace of weight $\mu$.
 Given a dominant weight $\lambda \in \mathfrak{h}_{\mathbb{Q}}^*$, we can associate to it the Weyl polytope $\mathrm{conv}(W \cdot \lambda)$, which we denote by $W_{\lambda}$.
 For an $n-$dimensional polytope $P$, we call a face of dimension $n-1$ a facet.
 
\begin{definition}
	Denote by $\mathscr{K}(G)$ the set of pairs $(\lambda, \mu) \in \Lambda^+ \times \Lambda^+$ such that $V_{\lambda}(\mu) \neq 0$. Let $\mathscr{K}(G)_{\mathbb{Q}^+}= \mathscr{K}(G) \otimes_{\mathbb{Z}^{\geq 0}} \mathbb{Q}^{\geq 0}$. Since we work over $\mathbb{Q}$ for this whole paper, we will abuse notation and write $\mathscr{K}(G)$ for $\mathscr{K}(G)_{\mathbb{Q}^+}$.
\end{definition}

The following proposition is well-known (see \cite{St}):

\begin{proposition}\label{steinberg}
	$\mathscr{K}(G)$ is a rational cone. Moreover, $(\lambda,\mu)\in \mathscr{K}(G)$ if and only if the inequalities 
	\[
	\langle \lambda-\mu,x_i\rangle \ge 0
	\]
	\[\langle \lambda, \alpha^{\vee}_i \rangle \geq 0 \]
	\[\langle \mu, \alpha^{\vee}_i \rangle \geq 0 \]
for each $i\in \{1 ,\dots r\}$ are satisfied. 
\end{proposition}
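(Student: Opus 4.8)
The content of the proposition is essentially the classical Steinberg criterion (a$'$) recorded in the introduction; everything else is elementary convex geometry, and that is how the plan is organized.

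First I would translate the three families of inequalities into familiar statements. Since $G$ is semisimple, $\Delta = \{\alpha_1, \dots, \alpha_r\}$ is a $\Q$-basis of $X^*(T) \otimes \Q$, and $\{x_1, \dots, x_r\}$ is by definition the dual basis. Hence any $\nu \in X^*(T) \otimes \Q$ may be written $\nu = \sum_i \langle \nu, x_i \rangle \alpha_i$, and the inequalities $\langle \lambda - \mu, x_i \rangle \ge 0$ for all $i$ hold precisely when $\lambda - \mu \in \sum_i \Q^{\geq 0} \alpha_i$. The inequalities $\langle \lambda, \alpha^\vee_i \rangle \ge 0$ (resp. $\langle \mu, \alpha^\vee_i \rangle \ge 0$) for all $i$ say exactly that $\lambda$ (resp. $\mu$) lies in the rational dominant cone. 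Write $\mathscr{C} \subset (X^*(T) \otimes \Q)^2$ for the common solution set of all $3r$ inequalities; it is visibly a rational polyhedral cone (and pointed, since the dominant cone is), so it remains only to identify it with $\mathscr{K}(G)$.

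Next I would check $\mathscr{K}(G) \subseteq \mathscr{C}$. Every generator of $\mathscr{K}(G)$ is an integral pair $(\lambda, \mu)$ with $\lambda, \mu \in \Lambda^+$ and $V_\lambda(\mu) \ne 0$; by criterion (a$'$), $\lambda - \mu$ is a nonnegative integral combination of the simple roots, so $(\lambda, \mu) \in \mathscr{C}$. As $\mathscr{C}$ is a convex cone it is closed under nonnegative rational combinations, so it contains the cone they generate, namely $\mathscr{K}(G)$. For the reverse inclusion I would clear denominators. Given a rational point $(\lambda, \mu) \in \mathscr{C}$, choose $N \in \Z^{> 0}$ with $N\lambda, N\mu \in X^*(T)$ and $N \langle \lambda - \mu, x_i \rangle \in \Z$ for every $i$. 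Then $N\lambda$ and $N\mu$ are dominant integral weights, and $N\lambda - N\mu = \sum_i (N \langle \lambda - \mu, x_i \rangle) \alpha_i$ is a nonnegative integral combination of simple roots; criterion (a$'$) now gives $V_{N\lambda}(N\mu) \ne 0$, so $(N\lambda, N\mu)$ is a generator of $\mathscr{K}(G)$ and $(\lambda, \mu) = \tfrac{1}{N}(N\lambda, N\mu) \in \mathscr{K}(G)$. Combining the two inclusions yields $\mathscr{K}(G) = \mathscr{C}$, which at once proves that $\mathscr{K}(G)$ is a rational cone and establishes the stated inequality description.

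I do not expect any genuinely difficult step: the representation theory is entirely contained in the cited criterion (a$'$). The only subtlety worth flagging is that an integral point of $\mathscr{C}$ need not itself be a generator of $\mathscr{K}(G)$ — for example $\lambda - \mu$ may fail to lie in the root lattice, as happens for $(\varpi, 0)$ in type $A_1$ — which is exactly why one must pass to the dilate $(N\lambda, N\mu)$ and argue rationally throughout.
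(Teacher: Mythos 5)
Your proof is correct and rests on exactly the input the paper intends: the paper gives no argument of its own, simply citing \cite{St} for criterion (a$'$), and your translation of the three families of inequalities plus the denominator-clearing step is the standard way to pass from that integral criterion to the statement about the rational cone $\mathscr{K}(G)$. In particular, the subtlety you flag --- that an integral point of the inequality cone may have $\lambda-\mu$ outside the root lattice, as for $(\varpi,0)$ in type $A_1$ --- is precisely the point that forces the dilation by $N$, and you handle it correctly.
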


%

\section{Vertices of the Intersection Polytope}
In order to study the extremal rays of the Kostka cone associated to $G$, we study the following associated polytope.

\begin{definition}
	We define the intersection polytope  $IP_{\lambda}:= \Lambda^+_\Q \cap W_{\lambda}$.
\end{definition}

This is clearly a convex polytope, as it is the intersection of the convex polytope $W_{\lambda}$ with a finite collection of half-spaces. The facets of $IP_{\lambda}$ fall into two classes. 

The first class of facets are sub-polytopes of the hyperplanes defining $\Lambda^+_\Q$.  These facets $F^{\lambda}_i$, associated to simple coroots, are defined as the intersection 

\begin{equation}F^{\lambda}_i= IP_{\lambda} \cap \{\mu \in \mathfrak{h}^*_{\mathbb{Q}} | \langle \mu, \alpha^{\vee}_i \rangle =0\}.
\end{equation}  

Similarly, we have a second class of facets $E^{\lambda}_j$, associated to fundamental coweights, which are sub-polytopes of some of the facets of $W_{\lambda}$, defined by
\begin{equation}
E^{\lambda}_j=IP_{\lambda} \cap \{\mu \in \mathfrak{h}^*_{\mathbb{Q}} | \langle \lambda-\mu, x_j \rangle =0 \}.
\end{equation}

The candidates for vertices of $IP_{\lambda}$ are suitable intersections $(\cap F^{\lambda}_i) \cap (\cap E^{\lambda}_j)$.

First we explore why it is at all reasonable to expect these intersections to be well-behaved.

\begin{proposition}\label{basis}
Suppose $I \sqcup J =\{1,\hdots,r\}$ (disjoint union).  Then the collection 
$$
\{\alpha_i^\vee\}_{i\in I}\bigcup \{x_j\}_{j\in J}
$$
form a basis of $\mathfrak{h}$. 
\end{proposition}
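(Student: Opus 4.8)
The plan is to count dimensions first. Since $G$ is semisimple of rank $r$, we have $\dim \mathfrak{h} = r = |I| + |J|$, so the collection $\{\alpha_i^\vee\}_{i\in I} \cup \{x_j\}_{j\in J}$ has exactly the right cardinality and it suffices to verify linear independence. So I would suppose a relation
\[
\sum_{i\in I} a_i \alpha_i^\vee + \sum_{j\in J} b_j x_j = 0
\]
holds in $\mathfrak{h}$ with $a_i, b_j \in \Q$, and aim to force every coefficient to vanish.

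The main trick is to pair both sides against the simple roots $\alpha_k$ for $k\in I$. Because $I$ and $J$ are disjoint, $k\notin J$, and the defining property $\langle \alpha_k, x_j\rangle = \delta_{kj}$ kills every $x_j$-term: the relation collapses to $\sum_{i\in I} a_i \langle \alpha_k, \alpha_i^\vee\rangle = 0$ for each $k \in I$. In other words, the vector $(a_i)_{i\in I}$ lies in the kernel of the $|I|\times|I|$ matrix $\big(\langle \alpha_k,\alpha_i^\vee\rangle\big)_{k,i\in I}$. This matrix is (the transpose of) the Cartan matrix of $L_I$, whose simple roots and coroots are $\{\alpha_i\}_{i\in I}$ and $\{\alpha_i^\vee\}_{i\in I}$; since Cartan matrices of finite-type root systems are nonsingular, we conclude $a_i = 0$ for all $i\in I$.

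With the first sum eliminated, the relation becomes $\sum_{j\in J} b_j x_j = 0$. Pairing against $\alpha_k$ for $k\in J$ and again using $\langle \alpha_k, x_j\rangle = \delta_{kj}$ gives $b_k = 0$ for every $k\in J$ (equivalently, one may just invoke that $\{x_j\}_{j=1}^r$ is itself a basis of $\mathfrak{h}$). This proves linear independence and hence the proposition.

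I do not expect any real obstacle here: the only nontrivial input is the nonsingularity of the Cartan matrix of the Levi $L_I$, which is classical (for instance, Cartan matrices in finite type are symmetrizable and positive definite). Everything else is bookkeeping with the pairing $\langle \alpha_i, x_j\rangle = \delta_{ij}$, and the disjointness of $I$ and $J$ is exactly what makes the two blocks of the relation decouple.
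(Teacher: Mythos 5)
Your proposal is correct and follows essentially the same route as the paper: pair the putative relation against the simple roots $\alpha_k$, $k\in I$, to decouple the coroot block from the coweight block, kill the $a_i$ using nondegeneracy inside the Levi $L_I$ (your nonsingular Cartan matrix is the paper's statement that the $\alpha_i|_{\mathfrak{h}_{L_I}}$, $i\in I$, form a basis of $\mathfrak{h}_{L_I}^*$), and then conclude $b_j=0$ from the linear independence of the $x_j$. No gaps.
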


\begin{proof}
If we can show they are linearly independent, that will be sufficient. Assume there is a relation 
$$
\sum a_i\alpha_i^\vee+\sum b_jx_j = 0.
$$
Then for every $i_0\in I$, 
$$
\alpha_{i_0}\left( \sum a_i\alpha_i^\vee\right) = 0.
$$
Let $L_I$ be the Levi associated to $I$. 
Then the element $\sum a_i\alpha_i^\vee\in \mathfrak{h}_{L_I}$ must be identically $0$ (since the $\alpha_i,i\in I$ form a basis of $\mathfrak{h}_{L_I}^*$). So each $a_i = 0$ in our relation. But 
$$
\sum b_j x_j = 0
$$
forces every $b_j = 0$ since the $x_j$ are linearly independent. 
\end{proof}

We use the following lemma from \cite{LusTits}:
\begin{lemma}\label{prakash}
Let $G$ be be semisimple. Let $\lambda$ be a dominant weight. Then for any $j$, $\langle \lambda, x_j \rangle \geq 0$. Furthermore, if $G$ is simple, this is strict.
\end{lemma}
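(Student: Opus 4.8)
The plan is to reduce the lemma to a positivity statement about the fundamental coweights, namely that $x_j$ is a nonnegative rational combination of the simple coroots, with all coefficients strictly positive when $G$ is simple. Write $\lambda=\sum_i a_i\varpi_i$, so that dominance of $\lambda$ says exactly $a_i=\langle\lambda,\alpha_i^\vee\rangle\ge 0$ for all $i$, and expand $x_j=\sum_i m_{ij}\alpha_i^\vee$ in the basis of simple coroots (a basis of $\mathfrak{h}$ since $G$ is semisimple). Since $\langle\varpi_i,\alpha_k^\vee\rangle=\delta_{ik}$, we get $\langle\lambda,x_j\rangle=\sum_i a_i m_{ij}$, so the whole statement follows once we know $m_{ij}\ge 0$ always and $m_{ij}>0$ for all $i$ when $G$ is simple. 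Conceptually: the dominant cone $\sum_i\mathbb{Q}_{\ge 0}\varpi_i$ in $\mathfrak{h}^*$ and the cone $\sum_i\mathbb{Q}_{\ge 0}\alpha_i^\vee$ in $\mathfrak{h}$ are dual to one another, so $\langle\lambda,x_j\rangle\ge 0$ for every dominant $\lambda$ exactly when $x_j$ lies in the second cone, and the $m_{ij}$ are (up to reindexing) the entries of the inverse Cartan matrix.

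For $m_{ij}\ge 0$ one can simply invoke the classical fact that the inverse of a Cartan matrix has nonnegative entries. If a self-contained argument is preferred, average over $W$: since $G$ is semisimple, $W$ fixes no nonzero vector of $\mathfrak{h}$, hence $\sum_{w\in W}w\,x_j=0$, so $x_j=\frac{1}{|W|}\sum_{w\in W}(x_j-w\,x_j)$; and each $x_j-w\,x_j$ is a nonnegative integer combination of simple coroots by the standard dominance-order fact applied to the dual root system, with $x_j$ a dominant coweight because $\langle\alpha_i,x_j\rangle=\delta_{ij}\ge 0$. This already gives the first assertion for all semisimple $G$.

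For strictness, assume $G$ simple and $\lambda\ne 0$; both hypotheses are genuinely needed (strictness fails for $\lambda=0$, and also for semisimple non-simple $G$). The term $w=s_j$ in the averaging formula is $x_j-s_jx_j=\langle\alpha_j,x_j\rangle\alpha_j^\vee=\alpha_j^\vee$, so $m_{jj}>0$. Let $S=\{i:m_{ij}>0\}$, a nonempty set containing $j$. For $k\notin S$ we have $k\ne j$, so $\langle\alpha_k,x_j\rangle=0$, giving $0=\langle\alpha_k,x_j\rangle=\sum_{i\in S}m_{ij}\langle\alpha_k,\alpha_i^\vee\rangle$, a sum of nonpositive terms since $\langle\alpha_k,\alpha_i^\vee\rangle\le 0$ for $i\ne k$; hence $\langle\alpha_k,\alpha_i^\vee\rangle=0$ for all $i\in S$, i.e. no node of $S$ is adjacent to a node outside $S$. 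Connectedness of the Dynkin diagram and $j\in S$ then force $S=\{1,\dots,r\}$, so all $m_{ij}>0$. Finally a nonzero dominant $\lambda$ has some $a_i>0$ because the $\varpi_i$ form a basis of $\mathfrak{h}^*$, whence $\langle\lambda,x_j\rangle=\sum_i a_i m_{ij}>0$.

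The only step with real content is this last propagation of positivity across the connected Dynkin diagram (equivalently, strict positivity of every entry of the inverse Cartan matrix in the irreducible case); everything else is bookkeeping with the defining pairings and the dominance order, so I expect that to be the one place requiring care.
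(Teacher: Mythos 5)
Your argument is correct, but it is worth noting that the paper does not actually prove this lemma: it is quoted from Lusztig--Tits \cite{LusTits}, whose content is precisely the statement you isolate, namely that the inverse of a Cartan matrix has nonnegative entries, with all entries strictly positive in the irreducible case. So your proposal amounts to a correct self-contained re-proof of the cited fact. Your reduction is the right one: writing $x_j=\sum_i m_{ij}\alpha_i^\vee$ (legitimate since the simple coroots span $\mathfrak{h}$ for $G$ semisimple) and pairing with $\lambda=\sum_i a_i\varpi_i$ turns the lemma into positivity of the $m_{ij}$. Your $W$-averaging argument for $m_{ij}\ge 0$ --- using that $\sum_{w\in W}w\,x_j=0$ because the reflection representation has no nonzero invariants, and that $x_j-w\,x_j$ is a nonnegative combination of simple coroots since $x_j$ is a dominant coweight --- is sound, and it is pleasantly parallel to the averaging formula the paper proves later in Proposition \ref{avg}. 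The strictness step is also correct: the $w=s_j$ term gives $m_{jj}>0$, and your support-propagation argument (if $m_{kj}=0$ then $\langle\alpha_k,x_j\rangle=0$ forces the support $S$ of $x_j$ to have no neighbors outside $S$, so connectedness of the Dynkin diagram gives $S=\{1,\dots,r\}$) is exactly the standard proof that the inverse Cartan matrix of an irreducible root system is strictly positive. You are also right to flag that strictness requires $\lambda\neq 0$; this hypothesis is implicit in the paper's statement, and only the non-strict inequality is used in the proof of Lemma \ref{iPrime}, so nothing downstream is affected. The trade-off is simply citation versus self-containment: the paper outsources the linear-algebra fact, while your route makes the paper independent of \cite{LusTits} at the cost of a paragraph.
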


\begin{definition}
	Suppose $I,J\subseteq \{1,\hdots,r\}$, and take $x \in \mathfrak{h}^*_{\mathbb{Q}}$ and $\lambda$ to be a dominant weight. If the system of equations 
	\begin{align*}
	\langle x,\alpha_i^\vee\rangle & = 0, ~~ i\in I\\
	\langle\lambda-x,x_j\rangle &= 0, ~~ j\in J
	\end{align*}
	has a unique solution, we will denote it by $v_{I,J}$. 
\end{definition}

\begin{lemma}\label{iPrime}
	The solutions $v_{I,J}$ where $I \sqcup J = \{1, \dots, r\}$ are vertices of $IP_\lambda$.
\end{lemma}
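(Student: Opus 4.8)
The plan is to first verify that $v_{I,J}$ is well-defined and actually lies in $IP_\lambda$, and then to deduce that it is a vertex from the fact that it saturates $r$ linearly independent supporting hyperplanes of $IP_\lambda$. For well-definedness, I would note that when $I\sqcup J=\{1,\dots,r\}$ Proposition \ref{basis} says the $r$ elements $\{\alpha_i^\vee\}_{i\in I}\cup\{x_j\}_{j\in J}$ form a basis of $\mathfrak{h}$, hence constitute $r$ linearly independent linear functionals on the $r$-dimensional space $\mathfrak{h}^*_{\Q}$; so the affine-linear system cutting out $v_{I,J}$ has a unique solution and $v:=v_{I,J}$ makes sense. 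Recall also that $\mu\in IP_\lambda$ if and only if $(\lambda,\mu)\in\mathscr{K}(G)$, which by Proposition \ref{steinberg} amounts to the inequalities $\langle\mu,\alpha_k^\vee\rangle\ge 0$ and $\langle\lambda-\mu,x_k\rangle\ge 0$ for all $k$ (the inequalities $\langle\lambda,\alpha_k^\vee\rangle\ge 0$ being automatic).

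The heart of the proof is checking $v\in IP_\lambda$. When $k\in I$ the inequality $\langle v,\alpha_k^\vee\rangle\ge 0$, and when $k\in J$ the inequality $\langle\lambda-v,x_k\rangle\ge 0$, hold with equality by the very definition of $v$. For the remaining cases I would argue as follows. The equations $\langle\lambda-v,x_j\rangle=0$ for $j\in J$ say that $\lambda-v$ is orthogonal to all $x_j$, $j\in J$, hence lies in $\operatorname{span}\{\alpha_i\}_{i\in I}$ (the annihilator of those coweights, by $\langle\alpha_i,x_j\rangle=\delta_{ij}$); write $\lambda-v=\sum_{i\in I}b_i\alpha_i$. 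Pairing with $\alpha_i^\vee$ for $i\in I$ gives $\langle\lambda-v,\alpha_i^\vee\rangle=\langle\lambda,\alpha_i^\vee\rangle\ge 0$, so $\lambda-v$ is a dominant weight of the semisimple group $L_I$. Since writing a dominant weight in the basis of simple roots produces nonnegative coefficients (equivalently, inverse Cartan matrices have nonnegative entries; alternatively, one may apply Lemma \ref{prakash} to $L_I$, observing that for $k\in I$ the value $\langle\lambda-v,x_k\rangle$ equals the pairing of $\lambda-v$ against the fundamental coweight of $L_I$ dual to $\alpha_k$, because $\lambda-v\in\operatorname{span}\{\alpha_i\}_{i\in I}$), we conclude $b_i\ge 0$ for every $i\in I$. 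This yields $\langle\lambda-v,x_k\rangle=b_k\ge 0$ for $k\in I$ at once, and for $k\in J$,
\[
\langle v,\alpha_k^\vee\rangle=\langle\lambda,\alpha_k^\vee\rangle-\sum_{i\in I}b_i\langle\alpha_i,\alpha_k^\vee\rangle\ge 0,
\]
because $\langle\lambda,\alpha_k^\vee\rangle\ge 0$, each $b_i\ge 0$, and $\langle\alpha_i,\alpha_k^\vee\rangle\le 0$ for $i\ne k$. Hence $v\in IP_\lambda$.

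It remains to see that $v$ is a vertex. By Proposition \ref{steinberg}, each hyperplane $\{\mu:\langle\mu,\alpha_i^\vee\rangle=0\}$ for $i\in I$ and $\{\mu:\langle\lambda-\mu,x_j\rangle=0\}$ for $j\in J$ is a supporting hyperplane of $IP_\lambda$, and by construction their common intersection is the single point $v$. So if $v=\frac{1}{2}(p+q)$ with $p,q\in IP_\lambda$, then linearity of the functionals involved together with the one-sided inequalities valid on $IP_\lambda$ forces $p$ and $q$ to satisfy all of these equations; hence $p=q=v$, so $v$ is an extreme point, i.e. a vertex of $IP_\lambda$. The only genuinely nontrivial step is the membership $v\in IP_\lambda$: once one decomposes $\lambda-v$ along the simple roots in $I$ and recognizes it as a dominant weight of $L_I$, the sign conditions all reduce to the positivity of inverse Cartan matrices (or to Lemma \ref{prakash} applied to $L_I$); everything else is bookkeeping with Propositions \ref{basis} and \ref{steinberg}.
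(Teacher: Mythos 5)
Your proof is correct and follows essentially the same route as the paper: uniqueness via Proposition \ref{basis}, then writing $\lambda-v$ as a combination of $\{\alpha_i\}_{i\in I}$, deducing nonnegativity of the coefficients from dominance for $L_I$ (Lemma \ref{prakash} / positivity of the inverse Cartan matrix), and checking the remaining inequalities $\langle v,\alpha_k^\vee\rangle\ge 0$ exactly as in the paper. The only difference is that you spell out the final ``saturates $r$ independent defining hyperplanes, hence is a vertex'' step, which the paper leaves implicit.
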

%
%

\begin{proof}
By Proposition \ref{basis}, there exists a unique solution in $\mathfrak{h}_\Q^*$ to the system of equations 
\begin{align*}
	\langle x,\alpha_i^\vee\rangle & = 0, ~~ i\in I\\
	\langle\lambda-x,x_j\rangle &= 0, ~~ j\in J.
\end{align*}
To ensure that $v_{I,J}$ is indeed inside $IP_\lambda$, it must satisfy the following two additional systems of inequalities: (a) $i\not\in I\implies \langle v_{I,J},\alpha_i^\vee\rangle\ge 0$ and (b) $j\not \in J\implies \langle \lambda - v_{I,J},x_j\rangle \ge 0$. The $J$ equations tell us that 
$$
\lambda = v_{I,J} + \sum_{i\in I} a_i\alpha_i
$$
for suitable rational numbers $a_i$. To establish (b), we must show that each $a_i\ge 0$. Indeed, restricted as weights for $L=L_I$, we have agreement 
$$
\lambda\big|_{\mathfrak{h}_{L}} = \sum_{i\in I} a_i \alpha_i\big|_{\mathfrak{h}_{L}}. 
$$
Furthermore, the $\alpha_i \big|_{\mathfrak{h}_{L}}$ are the simple roots of the $L$ root system and $\lambda\big|_{\mathfrak{h}_{L}}$ is still dominant. Therefore, by Lemma \ref{prakash} applied to $L$, each $a_i\ge 0$. Given that (b) holds, we can now verify (a): if $i \not \in I$ then 
$$
\langle v_{I,J},\alpha_i^\vee\rangle  = \langle \lambda,\alpha_i^\vee\rangle +\sum_{i'\in I}a_{i'} \langle -\alpha_{i'},\alpha_i^\vee\rangle \ge 0 
$$
since $\langle \alpha_{i'},\alpha_i^\vee\rangle \le 0$ when $i'\ne i$. 
\end{proof}

Next we will prove that any vertex of $IP_{\lambda}$ is of the form $v_{I,J}$ for suitable $I,J$ satisfying $I \cup J= \{1, \dots r\}$ and $I \cap J = \varnothing$, establishing Theorem \ref{verticestheorem} stated below. Certainly any vertex of $IP_{\lambda}$ is the intersection of the facets on which it lies, so every vertex is the unique solution to a system of the following form:
\begin{align}\nonumber
x&\in IP_\lambda\\\label{system0}
x&\in F_i^\lambda~\forall i\in I\\\nonumber
x&\in E_j^\lambda~\forall j\in J
\end{align}
for some $I,J\subseteq \{1,\hdots,r\}$, not necessarily disjoint. Note that a vertex $v$ satisfying the above system may satisfy yet more equalities of the form $\langle \lambda- v, x_l \rangle=0$ for $l \notin J$. For any such further equality satisfied by $v$, we add $l$ to $J$ until $J$ is maximal, so that $J=\{j|x\in E_j^\lambda\}$.
We now show that we can always assume $I\sqcup J = \{1,\hdots, r\}$. We begin with the following lemma.

\begin{lemma}\label{disj}
Suppose $I,J\subseteq\{1,\hdots,r\}$ and that $v$ is the unique solution to (\ref{system0}), with $J$ maximal. 
If $k \in I\cap J$, then $\langle\lambda,\alpha_k^\vee\rangle = 0$. Furthermore, $v$ is the unique solution to the weaker system
\begin{align}\nonumber
x&\in IP_\lambda\\\label{system1}
x&\in F_i^\lambda~\forall i\in I-\{k\}\\\nonumber
x&\in E_j^\lambda~\forall j\in J.
\end{align}
\end{lemma}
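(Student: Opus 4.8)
The plan is to prove the two claims in sequence, extracting the first as the crucial ingredient for the second. Suppose $k\in I\cap J$, so $v$ lies on both $F_k^\lambda$ and $E_k^\lambda$; that is, $\langle v,\alpha_k^\vee\rangle = 0$ and $\langle\lambda - v,x_k\rangle = 0$. I would like to conclude $\langle\lambda,\alpha_k^\vee\rangle = 0$. The natural tool is Lemma \ref{prakash} (the Lusztig--Tits positivity lemma) applied to a suitable Levi. Since $J$ is maximal, for every $j\in J$ we have $\langle\lambda - v, x_j\rangle = 0$; writing $\lambda - v$ in the coweight-dual basis, these equalities say precisely that $\lambda - v = \sum_{i\notin J} a_i\alpha_i$ for some rationals $a_i$ — equivalently, $\lambda$ and $v$ agree as weights on $\mathfrak h_{L}$ where $L = L_{\{1,\dots,r\}\setminus J}$. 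Wait — I should be careful about which Levi to use; the cleaner route is to note that membership in $IP_\lambda$ forces $\langle v,\alpha_i^\vee\rangle\ge 0$ for all $i$, so $v$ is a dominant weight, and then apply Lemma \ref{prakash} with $G$ itself (the simple ambient group) to get $\langle v, x_k\rangle > 0$ — provided $v\ne 0$. Hmm, that isn't quite the inequality I want either.

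Let me reorganize. The key observation is: $k\in J$ and $J$ maximal means $\langle\lambda - v, x_k\rangle = 0$, and $k\in I$ means $\langle v,\alpha_k^\vee\rangle = 0$. I want to relate $\langle\lambda,\alpha_k^\vee\rangle$ to something I control. Pair the relation $\lambda = v + \sum_{i\in I'} a_i\alpha_i$ (where $I'$ indexes the complement of $J$, and each $a_i\ge 0$ by the argument in Lemma \ref{iPrime}) against $x_k$: since $k\in J$, $\langle\lambda - v, x_k\rangle = 0$, i.e. $\sum_{i\in I'} a_i\langle\alpha_i, x_k\rangle = \sum_{i\in I'} a_i\,\delta_{ik} = 0$, but $k\notin I'$ so this is vacuous. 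Instead I should pair against $\alpha_k^\vee$: $\langle\lambda,\alpha_k^\vee\rangle = \langle v,\alpha_k^\vee\rangle + \sum_{i\in I'} a_i\langle\alpha_i,\alpha_k^\vee\rangle = 0 + \sum_{i\in I'} a_i\langle\alpha_i,\alpha_k^\vee\rangle$. Now $\langle\alpha_i,\alpha_k^\vee\rangle\le 0$ for $i\ne k$ and $k\notin I'$, so every term is $\le 0$, giving $\langle\lambda,\alpha_k^\vee\rangle\le 0$. Since $\lambda$ is dominant, $\langle\lambda,\alpha_k^\vee\rangle\ge 0$, hence $\langle\lambda,\alpha_k^\vee\rangle = 0$. (As a by-product each $a_i\langle\alpha_i,\alpha_k^\vee\rangle = 0$.) That settles the first claim, and it is essentially a one-line computation once the decomposition $\lambda - v = \sum a_i\alpha_i$ with $a_i\ge 0$ is in hand — which is exactly the content of the proof of Lemma \ref{iPrime}, so I would cite that argument rather than redo it.

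For the second claim — that $v$ remains the unique solution to the weaker system \eqref{system1}, with the constraint $x\in F_k^\lambda$ dropped — I must show two things: that $v$ still satisfies \eqref{system1} (immediate, since \eqref{system1} has strictly fewer constraints than \eqref{system0}), and that \eqref{system1} has no other solution. Uniqueness is the real point. The system \eqref{system1} consists of linear equalities indexed by $(I\setminus\{k\})\sqcup J$ together with the polytope membership $x\in IP_\lambda$. If the equalities $\{\langle x,\alpha_i^\vee\rangle = 0\}_{i\in I\setminus\{k\}}\cup\{\langle\lambda - x, x_j\rangle = 0\}_{j\in J}$ already cut out the single point $v$, we are done. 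So suppose not: then the affine span of those equalities is a positive-dimensional flat $\Pi$ through $v$, and $v$ is a vertex of the polytope $IP_\lambda\cap\Pi$. But on $\Pi$ we have $\langle\lambda - x, x_k\rangle = \langle\lambda, x_k\rangle - \langle x,x_k\rangle$; I would like to argue that the facet inequality $\langle x,\alpha_k^\vee\rangle\ge 0$ (valid on all of $IP_\lambda$) is automatically an equality on a neighborhood of $v$ in $\Pi$, making the constraint $x\in F_k^\lambda$ redundant near $v$. Concretely: since $k\in J$ and $J$ is maximal, and $\langle\lambda,\alpha_k^\vee\rangle = 0$ from the first part, $\lambda$ restricts to zero on the $\alpha_k$-coweight direction; combined with $k\in J$ (so $\langle\lambda - x, x_k\rangle = 0$ on $\Pi$), one shows $\langle x, \alpha_k^\vee\rangle$ is forced nonpositive on $\Pi\cap IP_\lambda$ — and being nonnegative there too, it vanishes, so $\Pi\cap IP_\lambda\subseteq F_k^\lambda$, whence the solution set of \eqref{system1} equals that of \eqref{system0}, namely $\{v\}$.

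The main obstacle I anticipate is this last redundancy argument: making precise why, once $\langle\lambda,\alpha_k^\vee\rangle = 0$ and $k\in J$, the constraint $F_k^\lambda$ adds nothing locally at $v$. The clean way is probably the "by-product" noted above — that in the relation $\langle\lambda,\alpha_k^\vee\rangle = \sum_{i\in I'} a_i\langle\alpha_i,\alpha_k^\vee\rangle = 0$ with all summands $\le 0$, each summand vanishes, so $a_i = 0$ whenever $\langle\alpha_i,\alpha_k^\vee\rangle < 0$; tracking which $\alpha_i$ actually appear in $\lambda - v$ then pins down $\langle v,\alpha_k^\vee\rangle = \langle\lambda,\alpha_k^\vee\rangle = 0$ not just at $v$ but along the whole flat, by the same linear identity applied to any solution $x$ of \eqref{system1} (since any such $x$ also admits a decomposition $\lambda - x = \sum_{i\in I'} a_i'\alpha_i$ from the $J$-equations). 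So in fact \emph{every} solution $x$ of \eqref{system1} automatically satisfies $\langle x,\alpha_k^\vee\rangle = 0$, i.e. lies in $F_k^\lambda$, and hence solves \eqref{system0} — forcing $x = v$. I'd present the proof in that order: establish the decomposition and positivity (citing Lemma \ref{iPrime}'s argument and Lemma \ref{prakash}), derive $\langle\lambda,\alpha_k^\vee\rangle = 0$, then observe the same computation shows any solution of \eqref{system1} lies in $F_k^\lambda$, and invoke uniqueness for \eqref{system0}.
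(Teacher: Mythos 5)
Your derivation of the first claim is correct and is essentially the paper's computation: write $\lambda-v=\sum_{\ell\notin J}a_\ell\alpha_\ell$ with $a_\ell\ge 0$, pair against $\alpha_k^\vee$, and use $\langle v,\alpha_k^\vee\rangle=0$, $\langle\alpha_\ell,\alpha_k^\vee\rangle\le 0$ and dominance of $\lambda$ to squeeze $\langle\lambda,\alpha_k^\vee\rangle=0$. Two small inaccuracies along the way: the nonnegativity $a_\ell\ge 0$ is not ``the argument in Lemma \ref{iPrime}'' (that argument requires $\langle v,\alpha_\ell^\vee\rangle=0$ for every $\ell$ in the support, which you do not have here); it is simply the inequality $\langle\lambda-v,x_\ell\rangle\ge 0$ from $v\in IP_\lambda$, cf.\ Proposition \ref{steinberg}. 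And ``since $J$ is maximal, for every $j\in J$ we have $\langle\lambda-v,x_j\rangle=0$'' is just membership in $E_j^\lambda$; maximality is the converse statement, namely $\langle\lambda-v,x_\ell\rangle>0$, i.e.\ $a_\ell>0$, for every $\ell\notin J$.

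The genuine gap is in the uniqueness transfer, and it is exactly the point where maximality must be used. From ``each summand vanishes'' you conclude only that $a_\ell=0$ whenever $\langle\alpha_\ell,\alpha_k^\vee\rangle<0$ --- a statement about the coefficients of $v$. An arbitrary solution $x$ of (\ref{system1}) has its own decomposition $\lambda-x=\sum_{\ell\notin J}a'_\ell\alpha_\ell$, and nothing you have proved rules out $a'_\ell>0$ at an index $\ell\notin J$ where $a_\ell=0$ and $\langle\alpha_\ell,\alpha_k^\vee\rangle<0$; for such an $x$ your ``same linear identity'' only gives $\langle x,\alpha_k^\vee\rangle=-\sum_\ell a'_\ell\langle\alpha_\ell,\alpha_k^\vee\rangle\ge 0$, not $=0$, so you cannot conclude $x\in F_k^\lambda$, and the claimed equality of solution sets fails as written. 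The paper's proof closes this by invoking maximality at precisely this spot: maximality of $J$ forces every $a_\ell>0$ for $\ell\notin J$, so the vanishing of each summand $a_\ell\langle\alpha_\ell,\alpha_k^\vee\rangle$ yields $\langle\alpha_\ell,\alpha_k^\vee\rangle=0$ for \emph{all} $\ell\notin J$ --- a fact about the roots themselves, independent of the particular point. With that, the identity applied to any solution $x$ of (\ref{system1}) collapses to $\langle x,\alpha_k^\vee\rangle=\langle\lambda,\alpha_k^\vee\rangle=0$ regardless of the $a'_\ell$, so $x$ satisfies (\ref{system0}) and hence $x=v$. Your outline is one observation away from this, but the final inference does not follow from what you established.
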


\begin{proof}

Take $k\in I\cap J$. By $v\in IP_\lambda$ and $v\in E_j^\lambda$ for all $j\in J$, we may write
$$
v = \lambda-\sum_{\ell\not\in J} k_\ell \alpha_\ell
$$
for some rational numbers $k_\ell\ge 0$. 
Furthermore, since $J$ is assumed to be maximal, we know each $k_\ell>0$. Because $k\in J$, $\langle \alpha_\ell,\alpha_k^\vee\rangle \le 0$ for all $\ell\not\in J$. Therefore,  given that $\langle v,\alpha_k^\vee\rangle = 0$,
$$
0\le \langle \lambda,\alpha_k^\vee\rangle   = \langle v,\alpha_k^\vee\rangle + \sum_{\ell\not\in J}k_\ell\langle \alpha_\ell,\alpha_k^\vee\rangle \le 0,
$$
so all expressions appearing are $0$. In particular, $\langle \lambda,\alpha_k^\vee\rangle = 0$. Furthermore, each $\langle \alpha_\ell,\alpha_k^\vee\rangle = 0$ since each $k_\ell>0$.



Now take any $v'$ satisfying (\ref{system1}). Again we can write $v'=\lambda- \sum_{i \notin J} b_i \alpha_i$. Note immediately that by the above conditions on $\lambda$ and $\alpha_i$ for $i \notin J$, $\langle v', \alpha_k^{\vee} \rangle =0$. Thus $v'$ satisfies (\ref{system0}), and so $v'=v$.
\end{proof}

Now let $v$ be an arbitrary vertex of $IP_{\lambda}$. Then for some $I,J\subseteq \{1,\hdots,r\}$, $v$ is defined by the following properties:
\begin{align}\nonumber
v&\in \left(\bigcap_{j\in J}E_j^\lambda\right)\cap \left(\bigcap_{i\in I} F_i^\lambda\right)\\\label{props}
\langle v,\alpha_i^\vee\rangle &\ge 0, i \not\in I\\\nonumber
\langle \lambda-v,x_j\rangle &\ge 0, j\not \in J
\end{align}
By Lemma \ref{disj}, we assume $I\cap J = \emptyset$ and $J$ is maximal. If $I\cup J = \{1,\hdots,r\}$, then we are done. Otherwise, set $K = \{1,\hdots,r\} - (I\cup J)$. From Lemma \ref{iPrime}, the point $v_{I\cup K,J}$ is a vertex of $IP_\lambda$; furthermore $v_{I\cup K,J}$ satisfies the system (\ref{props}), so $v = v_{I\cup K,J}$. This completes the proof of the following

\begin{theorem}\label{verticestheorem}
	Let $\lambda\in \Lambda^+$. The vertices of $IP_{\lambda}$ are exactly the points $v_{I,J}$ for $I,J$ satisfying $I\sqcup J = \{1,\hdots, r\}$. 
\end{theorem}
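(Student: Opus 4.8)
The plan is to combine Lemma \ref{iPrime} with the structural analysis immediately preceding the theorem. Lemma \ref{iPrime} already gives one inclusion: every $v_{I,J}$ with $I \sqcup J = \{1,\hdots,r\}$ is a vertex of $IP_\lambda$. So the content of the theorem is the reverse inclusion — that every vertex of $IP_\lambda$ arises this way — and that is exactly what the paragraph after Lemma \ref{disj} establishes, so the ``proof'' is really just the assembly of these pieces into a single clean statement.

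First I would recall that any vertex $v$ of $IP_\lambda$, being a $0$-dimensional face of a polytope cut out by the hyperplanes $\{\langle \mu,\alpha_i^\vee\rangle = 0\}$ and $\{\langle\lambda-\mu,x_j\rangle = 0\}$, must be the unique solution to the system (\ref{props}) for some sets $I$ (the simple-coroot facets it lies on) and $J$ (chosen maximal, so $J = \{j : v \in E_j^\lambda\}$). Then I would invoke Lemma \ref{disj} to replace $I$ by $I \setminus (I\cap J)$, so that we may assume $I \cap J = \varnothing$ without changing the fact that $v$ is the unique solution; the lemma guarantees this is harmless. If now $I \sqcup J = \{1,\hdots,r\}$ we are done, since then $v = v_{I,J}$ by definition. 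Otherwise let $K = \{1,\hdots,r\} \setminus (I \cup J)$; by Lemma \ref{iPrime} the point $v_{I\cup K, J}$ exists and is a vertex of $IP_\lambda$. The key observation is that $v_{I\cup K,J}$ visibly satisfies the (weaker) defining system (\ref{props}) for $v$ — it lies on all of the required facets $E_j^\lambda$ ($j\in J$) and $F_i^\lambda$ ($i\in I$), and it lies in $IP_\lambda$ — and since $v$ is the \emph{unique} point satisfying (\ref{props}), we conclude $v = v_{I\cup K, J}$, which is of the desired form $v_{I',J}$ with $I' = I\cup K$ and $I' \sqcup J = \{1,\hdots,r\}$.

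The two inclusions together prove the theorem. I expect the main (already-resolved) obstacle to be the verification that one can force $I \sqcup J = \{1,\hdots,r\}$: a priori a vertex is the intersection of however many facets it happens to meet, and there is no reason that collection should correspond to a partition of the index set. This is precisely why Lemma \ref{disj} (removing redundant indices from $I \cap J$) and Lemma \ref{iPrime} (filling in the leftover indices $K$ on the $I$-side without changing the point) are needed — the first trims $I$ down, the second pads it back up to a complement of $J$, and uniqueness of the solution to (\ref{props}) glues the two modified descriptions to the original vertex. Since all three ingredients are in hand, the proof of Theorem \ref{verticestheorem} itself is short: cite Lemma \ref{iPrime} for one direction and the $I,J$-reduction argument above for the other.
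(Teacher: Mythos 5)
Your proposal is correct and follows the paper's own argument essentially verbatim: Lemma \ref{iPrime} gives one inclusion, and for the converse the paper likewise takes a vertex defined by a maximal-$J$ system, applies Lemma \ref{disj} to make $I$ and $J$ disjoint, pads $I$ by $K = \{1,\hdots,r\}\setminus(I\cup J)$, and uses uniqueness of the solution to identify the vertex with $v_{I\cup K,J}$. No differences worth noting.
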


Finally, we make use of the inequalities of Theorem \ref{verticestheorem}, which completely classify the vertices of $IP_\lambda$, to give a concise formula for a given $v_{I,J}(\lambda)$ in the following  

\begin{proposition}\label{avg}
Suppose $I,J$ are disjoint and $I\sqcup J =  \{1,\hdots,r\}$. Let $W_I$ be the Weyl subgroup of $W$ generated by the simple reflections $s_i, i\in I$. Then we have 
$$
v_{I,J}(\lambda) = \frac{1}{|W_I|}\sum_{w\in W_I} w\lambda.
$$
\end{proposition}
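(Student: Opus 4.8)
The plan is to verify directly that the proposed averaged point $v := \frac{1}{|W_I|}\sum_{w\in W_I} w\lambda$ satisfies the two defining systems of linear equations for $v_{I,J}(\lambda)$, and then invoke uniqueness (Proposition \ref{basis}) to conclude $v = v_{I,J}(\lambda)$.

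First I would record the obvious structural fact that $v$ is $W_I$-invariant: for $i\in I$ the left-multiplication action of $s_i$ permutes $W_I$, so $s_i v = v$. Writing out $s_i v = v - \langle v,\alpha_i^\vee\rangle\alpha_i$, this forces $\langle v,\alpha_i^\vee\rangle = 0$ for every $i\in I$; this disposes of the first system $\langle x,\alpha_i^\vee\rangle = 0$, $i\in I$. For the second system, I would observe that
$$
\lambda - v = \frac{1}{|W_I|}\sum_{w\in W_I}(\lambda - w\lambda),
$$
and that each summand $\lambda - w\lambda$ lies in $\sum_{i\in I}\Q\alpha_i$. Indeed $s_i\mu - \mu = -\langle\mu,\alpha_i^\vee\rangle\alpha_i$ for any weight $\mu$, so a straightforward induction on the length of $w\in W_I$ (writing $w = s_i w'$ with $i\in I$) shows $w\lambda - \lambda \in \sum_{i\in I}\Q\alpha_i$. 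Hence $\lambda - v = \sum_{i\in I} c_i\alpha_i$ for suitable rationals $c_i$. Then for $j\in J$ we get $\langle\lambda - v, x_j\rangle = \sum_{i\in I} c_i\langle\alpha_i,x_j\rangle = \sum_{i\in I} c_i\,\delta_{ij} = 0$, since $I$ and $J$ are disjoint. Thus $v$ satisfies the full system defining $v_{I,J}(\lambda)$.

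Finally, since $I\sqcup J = \{1,\dots,r\}$, Proposition \ref{basis} guarantees that the combined system has a \emph{unique} solution in $\mathfrak{h}^*_\Q$, so the point $v_{I,J}(\lambda)$ of Theorem \ref{verticestheorem} is well-defined and must equal $v$. (One may also note en passant that the induction above, combined with dominance of $\lambda|_{\mathfrak{h}_{L_I}}$, shows the $c_i$ are in fact $\geq 0$, which re-proves that $v_{I,J}(\lambda)\in IP_\lambda$ as in Lemma \ref{iPrime}, though this is not needed here.)

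There is no real obstacle in this argument; the only point requiring a word of justification is that $\lambda - v$ lies in the span of $\{\alpha_i : i\in I\}$, and even that reduces immediately to the elementary identity $s_i\mu - \mu = -\langle\mu,\alpha_i^\vee\rangle\alpha_i$ together with an induction on Coxeter length in $W_I$. The conceptual content is entirely carried by $W_I$-invariance of the average on one side and the pairing $\langle\alpha_i,x_j\rangle = \delta_{ij}$ on the other.
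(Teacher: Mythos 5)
Your proposal is correct and follows essentially the same route as the paper: check that the average is $W_I$-invariant (giving $\langle v,\alpha_i^\vee\rangle=0$ for $i\in I$), check that $\lambda-v$ lies in $\sum_{i\in I}\Q\alpha_i$ (giving the $J$ equations), and conclude by uniqueness of the solution. The only cosmetic difference is that the paper cites the standard fact that $w\lambda=\lambda-\sum_{i\in I}k_i\alpha_i$ with $k_i\ge 0$ where you instead prove the span statement by induction on length in $W_I$; both are fine.
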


\begin{proof}

Set $v= \frac{1}{|W_I|}\sum_{w\in W_I} w\lambda$. First, for each $w \in W_I$, note that $w\lambda$ has an expansion $\lambda-\sum_{i \in I} k_{i} \alpha_{i}$ for some suitable nonnegative integers $k_{i}$; averaging across $W_I$, we therefore see that $\lambda-v$ is a nonnegative rational combination of simple roots $\alpha_i$ for $i \in I$. Second, for any $i \in I$, note that $s_i(v)=v$; therefore since the pairing is $W$-invariant, we have $\langle v, \alpha_i^\vee \rangle = \langle v, -\alpha_i^\vee \rangle =0$. So $v$ satisfies the system of equations defining $v_{I,J}$. 
\end{proof}


\begin{remark}\label{Averts}
In type $A$, the vertices of the polytopes $IP_\lambda$ were enumerated by Hoffman \cite{Hoff} while reproducing a theorem of Hardy, Littlewood, and P\'olya which characterizes dominance order using doubly-stochastic matrices.
\end{remark}

\begin{remark}\label{regularremark} For $G$ of any type, note that if $\lambda$ is regular dominant, the $v_{I, J}$ are all distinct for different pairs $(I,J)$ satisfying $I\sqcup J = \{1,\hdots,r\}$; in particular there are $2^r$ vertices. This follows from Lemma \ref{disj}. 

This is also recorded in \cite{BZ}*{Proposition 13 and following remarks}, attributed to Kostant, for the case $\lambda=2\rho$. Further, in loc. cit. the vertices of $IP_{2\rho}$ are given by $v_{I,J}=\rho+w_I\rho$, where $w_I \in W_I$ is the longest word. It is straightforward to check that this agrees with Proposition \ref{avg} for $\lambda=2\rho$. While the remarks in loc. cit. say that the number of vertices can be extended to any regular dominant weight $\lambda$ in place of $2\rho$, they do not give a similar formula for the vertices of $IP_\lambda$. Thus the content of Proposition \ref{avg} might be known to experts; however, we have not seen it in the literature. 
\end{remark}

As a consequence of Proposition \ref{avg}, we immediately obtain the following corollary, which allows us to relate vertices of distinct polytopes, and establishes that $IP_{\lambda + \mu}$ is the Minkowski sum of $IP_\lambda$ and $IP_\mu$. 

\begin{corollary}\label{linear}
	The vertices $v_{I,J}(\lambda)$ of $IP_{\lambda}$ depend linearly on $\lambda$.
\end{corollary}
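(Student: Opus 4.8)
The plan is to use the averaging formula from Proposition \ref{avg} directly. Since Theorem \ref{verticestheorem} tells us the vertices of $IP_\lambda$ are exactly the finitely many points $v_{I,J}(\lambda)$ indexed by the ordered partitions $I\sqcup J = \{1,\dots,r\}$, and Proposition \ref{avg} gives the closed formula
\[
v_{I,J}(\lambda) = \frac{1}{|W_I|}\sum_{w\in W_I} w\lambda,
\]
it suffices to observe that for each fixed $I$ the right-hand side is a linear function of $\lambda$. First I would note that the action of each $w \in W_I$ on $\mathfrak{h}^*_\Q$ is linear, so each summand $w\lambda$ depends linearly on $\lambda$; a finite average of linear maps is linear, so $\lambda \mapsto v_{I,J}(\lambda)$ is linear. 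Crucially, the set $W_I$ does not depend on $\lambda$, only on the index set $I$, so no issue of varying domain arises.

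The one subtlety worth addressing is that the labeling of vertices is not injective in general: for non-regular $\lambda$, distinct pairs $(I,J)$ may give the same point $v_{I,J}(\lambda)$, and the combinatorial type of $IP_\lambda$ can jump as $\lambda$ varies. So "the vertices depend linearly on $\lambda$" must be understood in the sense that, for each fixed pair $(I,J)$ with $I\sqcup J = \{1,\dots,r\}$, the formula $\lambda \mapsto v_{I,J}(\lambda)$ is a linear map $\mathfrak{h}^*_\Q \to \mathfrak{h}^*_\Q$, and by Theorem \ref{verticestheorem} every vertex of $IP_\lambda$ is of this form for some such $(I,J)$. I would state the corollary's proof in exactly these terms to avoid any ambiguity.

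For the second assertion — that $IP_{\lambda+\mu} = IP_\lambda + IP_\mu$ (Minkowski sum) — I would argue as follows. The inclusion $IP_\lambda + IP_\mu \subseteq IP_{\lambda+\mu}$ is clear from the description $IP_\nu = \Lambda^+_\Q \cap W_\nu$ together with $W_{\lambda+\mu} \supseteq W_\lambda + W_\mu$ and the fact that $\Lambda^+_\Q$ is closed under addition; alternatively, one checks directly that a sum of two points satisfying the Proposition \ref{steinberg} inequalities (with first coordinates $\lambda$ and $\mu$) satisfies them with first coordinate $\lambda+\mu$. For the reverse inclusion, since a polytope is determined by its vertices, it is enough to show every vertex of $IP_{\lambda+\mu}$ lies in $IP_\lambda + IP_\mu$; but a vertex is $v_{I,J}(\lambda+\mu) = v_{I,J}(\lambda) + v_{I,J}(\mu)$ by linearity, and each summand lies in the respective polytope by Theorem \ref{verticestheorem}.

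The main obstacle, such as it is, is purely expository: being careful that the statement is interpreted parametrically in $(I,J)$ rather than claiming a continuous bijection between vertex sets, and making sure the Minkowski sum claim handles the degenerate cases (e.g.\ when $IP_{\lambda+\mu}$ has fewer vertices than the generic $2^r$) — both of which are dispatched by the "a polytope is its convex hull of vertices" principle combined with the already-established classification. No hard computation is needed; the content is entirely carried by Proposition \ref{avg} and Theorem \ref{verticestheorem}.
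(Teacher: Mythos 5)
Your argument is correct and is essentially the paper's: Corollary \ref{linear} is treated there as an immediate consequence of the averaging formula of Proposition \ref{avg}, exactly as you observe (each $w\lambda$ is linear in $\lambda$ and $W_I$ is independent of $\lambda$), with Theorem \ref{verticestheorem} guaranteeing every vertex has this form. Your additional verification of the Minkowski sum statement $IP_{\lambda+\mu}=IP_\lambda+IP_\mu$, which the paper asserts without proof, is also sound.
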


\section{First description of the Extremal Rays}\label{desc}

We begin by naming {\it some} extremal rays of $\mathscr{K}(G)$. 

\begin{proposition}\label{somerays}
Suppose $I\sqcup J = \{1,\hdots, r\}$. Then $(\varpi_i,v_{I,J}(\varpi_i))$ gives an extremal ray of $\mathscr{K}(G)$. 
\end{proposition}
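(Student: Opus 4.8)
The plan is to show that the ray through $(\varpi_i, v_{I,J}(\varpi_i))$ cannot be written as a nonnegative combination of two other points of $\mathscr{K}(G)$ unless those points are themselves on that ray. Suppose we have a decomposition
$$
(\varpi_i, v_{I,J}(\varpi_i)) = (\lambda_1,\mu_1) + (\lambda_2,\mu_2)
$$
with $(\lambda_k,\mu_k) \in \mathscr{K}(G)$. First I would argue at the level of the first coordinate: since $\varpi_i = \lambda_1 + \lambda_2$ and $\varpi_i$ is a fundamental weight (hence on an extremal ray of the dominant cone $\Lambda^+_\Q$), both $\lambda_1$ and $\lambda_2$ must be nonnegative scalar multiples of $\varpi_i$, say $\lambda_k = t_k \varpi_i$ with $t_1 + t_2 = 1$ and $t_k \ge 0$. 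This reduces the problem to the second coordinate.

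Next I would use Corollary \ref{linear} (linearity of the vertices $v_{I,J}$ in $\lambda$): the point $(t_k\varpi_i, \mu_k)$ lies in $\mathscr{K}(G)$, so $\mu_k$ lies in the intersection polytope $IP_{t_k\varpi_i} = t_k \cdot IP_{\varpi_i}$, and moreover $\mu_1 + \mu_2 = v_{I,J}(\varpi_i)$, which by linearity equals $t_1 v_{I,J}(\varpi_i) + t_2 v_{I,J}(\varpi_i)$. Writing $\mu_k = t_k \nu_k$ with $\nu_k \in IP_{\varpi_i}$ (assuming for the moment $t_k > 0$; the degenerate case $t_k = 0$ is handled separately), we get $t_1\nu_1 + t_2\nu_2 = v_{I,J}(\varpi_i)$, a convex combination expressing the vertex $v_{I,J}(\varpi_i)$ of the polytope $IP_{\varpi_i}$ in terms of points $\nu_1,\nu_2$ of that same polytope. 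Since $v_{I,J}(\varpi_i)$ is a genuine vertex (Theorem \ref{verticestheorem}, or the construction of Lemma \ref{iPrime}), this forces $\nu_1 = \nu_2 = v_{I,J}(\varpi_i)$, hence $\mu_k = t_k v_{I,J}(\varpi_i)$ and $(\lambda_k,\mu_k) = t_k(\varpi_i, v_{I,J}(\varpi_i))$, as desired.

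The main obstacle I anticipate is the bookkeeping around the degenerate case where one of the $t_k$ vanishes: then $\lambda_k = 0$, and I must check that the only point of $\mathscr{K}(G)$ with first coordinate $0$ is the origin (which follows since $\mu_k$ is dominant and $0 - \mu_k$ must be a nonnegative combination of simple roots, forcing $\mu_k = 0$), so that this case contributes nothing and the whole weight of the decomposition sits on the surviving ray. A second, more conceptual point requiring care: I should confirm that $\mathscr{K}(G)$ is a pointed cone (so that "extremal ray" is meaningful and the decomposition argument above is the correct criterion) — this is essentially Proposition \ref{steinberg}, since the defining inequalities include $\langle \lambda,\alpha_i^\vee\rangle \ge 0$ and $\langle \mu,\alpha_i^\vee\rangle \ge 0$ for all $i$, which already cut out a pointed cone in $(X^*(T)_\Q)^2$. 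Everything else is routine manipulation of convex combinations.
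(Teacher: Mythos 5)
Your proposal is correct and follows essentially the same route as the paper's proof: reduce to the first coordinate to get $\lambda_k = t_k\varpi_i$, dispose of the degenerate case $t_k=0$, rescale the second coordinates into $IP_{\varpi_i}$, and conclude from the fact that a vertex of a convex polytope admits no nontrivial convex decomposition. The only cosmetic difference is that you invoke Corollary \ref{linear} for the rescaling $IP_{t_k\varpi_i}=t_k\,IP_{\varpi_i}$, where the paper simply notes this by homogeneity of the defining inequalities.
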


\begin{proof}
Suppose $(\varpi_i,v_{I,J}(\varpi_i)) = (\lambda_1,\mu_1)+(\lambda_2,\mu_2)$ where each $(\lambda_j,\mu_j)\in \mathscr{K}(G)$. We wish to show the $(\lambda_j,\mu_j)$ are parallel. Since the $\lambda_j$ are both dominant and sum to $\varpi_i$, we must have $\lambda_1 = a_1\varpi_i$ and $\lambda_2 = a_2\varpi_i$ where $a_1,a_2\ge 0$ and $a_1+a_2=1$. It follows that $\mu_1$ is inside the polytope $IP_{a_1\varpi_i}$ and $\mu_2$ inside $IP_{a_2\varpi_i}$. Since $a_1=0$ makes $(\lambda_1,\mu_1)=(0,0)$, in which case the pair $(\lambda_j,\mu_j)$ are trivially parallel, we may assume $a_1>0$. Likewise, we assume $a_2>0$. By scaling, $\frac{1}{a_j}\mu_j$ for $j=1,2$ belongs to the polytope $IP_{\varpi_i}$. Furthermore, the vertex $v_{I,J}(\varpi_i)$ is equal to the convex sum
$$
a_1 \left(\frac{1}{a_1} \mu_1\right) + a_2\left( \frac{1}{a_2} \mu_2\right)
$$
of elements in $IP_{\varpi_i}$. By the following lemma (a standard result in convex geometry whose proof we will omit), this forces $\frac{1}{a_j}\mu_j = v_{I,J}(\varpi_i)$ for each $j=1,2$. 
\end{proof}

\begin{lemma}
Let $P$ be a compact, convex polytope inside of $\R^n$ defined as the solution space to a system of linear inequalities (given by linear functions $f_j:\R^n\to \R,j=1,\hdots,m$):
$$
P = \{\vec x\in \R^n| f_j(\vec x)\ge 0~\forall j\}
$$
Let $v\in P$ be a vertex 
 If $v=t_1x_1+t_2x_2$ for points $x_i\in P$ and positive real numbers $t_i$ such that $t_1+t_2=1$, then each $x_i = v$. 
\end{lemma}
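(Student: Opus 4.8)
The plan is to use the characterization of vertices of a polytope in terms of the constraints they saturate. Specifically, since $v$ is a vertex of $P$, the set of indices $S = \{j : f_j(v) = 0\}$ has the property that $\{f_j : j \in S\}$ spans $(\mathbb{R}^n)^*$ (equivalently, $v$ is the unique point of $P$ at which all the $f_j$, $j \in S$, simultaneously vanish). I would first record this fact as the working definition of a vertex.

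The key step is then to show that every constraint active at $v$ is also active at $x_1$ and $x_2$. Fix $j \in S$, so $f_j(v) = 0$. Since $x_1, x_2 \in P$, we have $f_j(x_1) \ge 0$ and $f_j(x_2) \ge 0$. But $f_j$ is linear (affine), so $0 = f_j(v) = f_j(t_1 x_1 + t_2 x_2) = t_1 f_j(x_1) + t_2 f_j(x_2)$, a nonnegative combination (as $t_1, t_2 > 0$) of two nonnegative numbers equal to zero; hence $f_j(x_1) = f_j(x_2) = 0$. Thus $x_1$ and $x_2$ both lie in the affine subspace cut out by $\{f_j = 0 : j \in S\}$, which is the single point $\{v\}$; therefore $x_1 = x_2 = v$.

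I do not anticipate a genuine obstacle here, as this is a standard fact; the only point requiring mild care is justifying that the active constraint set at a vertex determines the vertex uniquely. If one prefers to avoid invoking that characterization of "vertex," one can argue directly by contradiction: if $x_1 \ne v$, then since $t_2 > 0$ and $t_1 + t_2 = 1$ with $t_1 > 0$, the point $v$ lies in the relative interior of the segment $[x_1, x_2]$ (assuming, harmlessly, $x_1 \ne x_2$, else both equal $v$ immediately), so $v = \tfrac{1}{2}(v + \epsilon(x_2 - x_1)) + \tfrac{1}{2}(v - \epsilon(x_2-x_1))$ for small $\epsilon > 0$ with both perturbed points still in $P$ by the affine-combination computation above applied to nearby points, contradicting that $v$ is an extreme point. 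Either route is short; I would present the first, as it is cleanest and the excerpt already describes vertices via saturated facets.
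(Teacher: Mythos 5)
Your argument is correct, and in fact the paper gives no proof at all for this lemma (it is cited as ``a standard result in convex geometry whose proof we will omit''), so your write-up simply supplies the standard argument the authors had in mind: active constraints at $v$ are forced to be active at $x_1,x_2$ because $t_1,t_2>0$, and the active set at a vertex pins down the point. The only point to state carefully is the vertex characterization you already flag --- that $v$ is the unique point of $P$ (equivalently, of the affine set cut out by the active constraints, whose linear parts span) satisfying $f_j=0$ for all active $j$ --- and with that recorded, the proof is complete.
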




To complete the initial enumeration of all extremal rays of $\mathscr{K}(G)$, we now demonstrate that every extremal ray has the special form above. 





\begin{proposition}\label{firstcoord}
	If $(\lambda, \mu)$ is an extremal ray of $\mathscr{K}(G)$, then, up to scaling, $\lambda=\varpi_i$ for some $i$ and $\mu = v_{I,J}(\varpi_i)$ for some $I,J$ as above. 
\end{proposition}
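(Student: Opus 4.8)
The plan is to show two things: first, that the first coordinate $\lambda$ of an extremal ray must (up to scaling) be a fundamental weight $\varpi_i$; and second, that once $\lambda = \varpi_i$, the second coordinate $\mu$ must be a vertex of $IP_{\varpi_i}$, which by Theorem \ref{verticestheorem} means $\mu = v_{I,J}(\varpi_i)$ for some $I \sqcup J = \{1,\dots,r\}$. Proposition \ref{somerays} already supplies the converse direction, so this proposition completes the classification.

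For the first claim, I would argue by contradiction. Suppose $(\lambda,\mu)$ spans an extremal ray but $\lambda$ is not a multiple of any fundamental weight. Since $\lambda$ is dominant, write $\lambda = \sum_i n_i \varpi_i$ with all $n_i \ge 0$; the hypothesis means at least two of the $n_i$ are strictly positive. Pick an index $k$ with $n_k > 0$ and set $\lambda_1 = n_k\varpi_k$, $\lambda_2 = \lambda - \lambda_1$; both are nonzero dominant weights. By Corollary \ref{linear} the polytope-valued assignment $\lambda \mapsto IP_\lambda$ is linear in the sense that $IP_{\lambda_1 + \lambda_2}$ is the Minkowski sum $IP_{\lambda_1} + IP_{\lambda_2}$; hence $\mu$, being a point of $IP_\lambda$, can be written as $\mu = \mu_1 + \mu_2$ with $\mu_j \in IP_{\lambda_j}$, i.e. $(\lambda_j,\mu_j)\in\mathscr{K}(G)$. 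This exhibits $(\lambda,\mu) = (\lambda_1,\mu_1) + (\lambda_2,\mu_2)$ as a sum of two elements of the cone; for the ray to be extremal these must be parallel, forcing $\lambda_1 \parallel \lambda_2$, contradicting the fact that $n_k\varpi_k$ and $\lambda - n_k\varpi_k$ point in genuinely different directions (they have disjoint supports among the fundamental weights). So $\lambda$ is a multiple of a single $\varpi_i$, and after rescaling the ray generator we may take $\lambda = \varpi_i$.

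For the second claim, assume now $\lambda = \varpi_i$. The point $\mu \in IP_{\varpi_i}$; since $IP_{\varpi_i}$ is a compact convex polytope, $\mu$ is a convex combination $\mu = \sum_t a_t v_t$ of its vertices $v_t$, with $a_t > 0$ and $\sum a_t = 1$. Each $v_t$ is of the form $v_{I_t,J_t}(\varpi_i)$ by Theorem \ref{verticestheorem}, and each pair $(\varpi_i, v_t)$ lies in $\mathscr{K}(G)$ (indeed spans an extremal ray, by Proposition \ref{somerays}). Then $(\varpi_i,\mu) = \sum_t a_t(\varpi_i, v_t)$ expresses our ray generator as a positive combination of cone elements; extremality forces all the summands $(\varpi_i,v_t)$ to be parallel to $(\varpi_i,\mu)$, and since they share the first coordinate $\varpi_i$ they must all equal $(\varpi_i,\mu)$. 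Hence $\mu = v_t = v_{I_t,J_t}(\varpi_i)$ is itself a vertex, as desired.

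I do not anticipate a serious obstacle here; the only place that needs a little care is the Minkowski-sum input in the first claim, since what is literally recorded in Corollary \ref{linear} and the surrounding text is that the vertices $v_{I,J}(\lambda)$ depend linearly on $\lambda$ and that $IP_{\lambda+\mu} = IP_\lambda + IP_\mu$; I would make sure to cite that Minkowski-sum statement explicitly (it follows immediately from Proposition \ref{avg}, since averaging over $W_I$ is additive in $\lambda$) so that the decomposition $\mu = \mu_1 + \mu_2$ is justified. One should also double-check the degenerate cases where $\mu_j = 0$ or $\lambda_j = 0$ are handled — these only make the pairs trivially parallel, exactly as in the proof of Proposition \ref{somerays} — but this is routine.
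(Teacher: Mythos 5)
Your argument is correct, and it uses the same two essential inputs as the paper --- the linearity of the vertices $v_{I,J}(\lambda)$ in $\lambda$ (Proposition \ref{avg}/Corollary \ref{linear}) and the decomposition of a point of a polytope as a convex combination of its vertices, combined with extremality --- but it packages them differently. The paper does everything in one expansion: writing $\mu=\sum a_{I,J}v_{I,J}(\lambda)$ and $\lambda=\sum b_k\varpi_k$, it gets $(\lambda,\mu)=\sum a_{I,J}\sum b_k(\varpi_k,v_{I,J}(\varpi_k))$ and lets extremality force a single $b_k$ and a single vertex simultaneously. You instead argue in two stages: first you pin the first coordinate to a fundamental weight by splitting $\lambda=n_k\varpi_k+(\lambda-n_k\varpi_k)$ and invoking the Minkowski-sum statement $IP_{\lambda_1+\lambda_2}=IP_{\lambda_1}+IP_{\lambda_2}$ (which the paper asserts immediately before Corollary \ref{linear}, so the citation is legitimate, and which indeed follows from vertex linearity exactly as you note), reaching a contradiction from the disjoint supports of $\lambda_1$ and $\lambda_2$; then, with $\lambda=\varpi_i$ fixed, you run the vertex-decomposition step inside $IP_{\varpi_i}$. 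The paper's version is more compact and avoids the contradiction argument; yours isolates the role of the first coordinate (mirroring the sketch given in the introduction for $\mathscr{K}'(G)$) and makes explicit exactly where the Minkowski-sum/linearity property enters, at the cost of invoking it as a separate statement. Your handling of the degenerate cases ($\lambda_j$ or $\mu_j$ zero, $a_t>0$) is also fine, so there is no gap.
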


\begin{proof}
	Let $(\lambda, \mu) \in \mathscr{K}(G)$. Since $\mu \in IP_{\lambda}$, we can write $\mu=\sum a_{I,J}v_{I,J}(\lambda)$ where $\sum a_{I,J}=1$. Moreover $\lambda=\sum a_{I,J} \lambda$. Thus we can rewrite $(\lambda, \mu)=(\sum a_{I,J}\lambda, \sum a_{I,J}v_{I,J}(\lambda))=\sum a_{I,J}(\lambda, v_{I,J}(\lambda)).$ Now we write $\lambda=\sum b_k \varpi_k$, and using that the $v_{I,J}(\lambda)$ are linear in $\lambda$ (Corollary \ref{linear}) we have 
	\begin{align*}
	(\lambda, \mu) &= \sum a_{I,J} (\lambda, v_{I,J}(\lambda)) \\
	&= \sum a_{I,J} (\sum b_k \varpi_k, v_{I,J}(\sum b_k \varpi_k)) \\
	&= \sum a_{I,J} (\sum b_k \varpi_k, \sum b_k v_{I,J}( \varpi_k)) \\
	&= \sum a_{I,J} \sum b_k (\varpi_k, v_{I,J}(\varpi_k)).
	\end{align*}
Since $(\lambda,\mu)$ is extremal, it is parallel to every $(\varpi_k, v_{I,J}(\varpi_k))$ such that $a_{I,J}b_k\ne 0$. Hence there is only one nonzero $b_k$ and the collection of pairs $I,J$ such that $a_{I,J}\ne 0$ all produce the same vertex $v_{I,J}(\varpi_k)$. 
\end{proof}


\section{Enumeration of the Extremal Rays}\label{enum}

Thus the generators of extremal rays of $\mathscr{K}(G)$ coincide with the collection of $(\varpi_i, v_{I,J}(\varpi_i)).$ There could be redundancy among the $v_{I,J}(\varpi_i)$s (indeed there is), so we now work toward enumerating these extremal rays without repetition. For the remainder of this section, fix an index $i$. 


We first take note of the redundancy of vertices $v_{I,J}(\varpi_i)$ for which $i\not\in I$. 
\begin{lemma}\label{mt}
If $i\not\in I$, then $v_{I,J}(\varpi_i) = \varpi_i$. 
\end{lemma}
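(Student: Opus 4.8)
The plan is to use the averaging formula from Proposition \ref{avg}, which expresses $v_{I,J}(\varpi_i) = \frac{1}{|W_I|}\sum_{w\in W_I} w\varpi_i$, and to observe that when $i\notin I$ the entire group $W_I$ fixes $\varpi_i$.

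First I would recall that $W_I$ is generated by the simple reflections $s_j$ for $j\in I$. For each such generator, the standard formula gives $s_j\varpi_i = \varpi_i - \langle \varpi_i,\alpha_j^\vee\rangle\,\alpha_j = \varpi_i - \delta_{ij}\,\alpha_j$. Since $i\notin I$ forces $j\neq i$ for every generator $s_j$ of $W_I$, each generator fixes $\varpi_i$, and hence every element $w\in W_I$ fixes $\varpi_i$.

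Then I would simply substitute into the averaging formula of Proposition \ref{avg}:
\[
v_{I,J}(\varpi_i) \;=\; \frac{1}{|W_I|}\sum_{w\in W_I} w\varpi_i \;=\; \frac{1}{|W_I|}\sum_{w\in W_I}\varpi_i \;=\; \varpi_i,
\]
which is the claim. (Alternatively, one could argue directly from the defining equations of $v_{I,J}$: the point $\varpi_i$ satisfies $\langle \varpi_i,\alpha_k^\vee\rangle = \delta_{ik} = 0$ for all $k\in I$, and $\varpi_i - \varpi_i = 0$ trivially lies in the span of $\{\alpha_j\}_{j\in I}$, so $\varpi_i$ solves the system defining $v_{I,J}$; uniqueness from Proposition \ref{basis} then finishes it. But the route via Proposition \ref{avg} is cleaner.)

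There is essentially no obstacle here: the only thing to be careful about is that $i\notin I$ is exactly the hypothesis needed so that no generating reflection $s_j$ of $W_I$ moves $\varpi_i$, so the statement is really just a direct consequence of the already-established formula for $v_{I,J}$.
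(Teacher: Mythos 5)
Your proof is correct and follows the paper's own argument: the paper likewise deduces the lemma immediately from the observation that $W_I$ fixes $\varpi_i$ together with the averaging formula of Proposition \ref{avg}. Your extra verification that each generator $s_j$, $j\in I$, fixes $\varpi_i$ simply spells out what the paper leaves implicit.
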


\begin{proof}
This follows immediately from the observation that $W_I$ fixes $\varpi_i$, and Proposition \ref{avg}. 
\end{proof}

So from now on, assume that $i\in I$. If the Levi subdiagram corresponding to $I$ is not simple, we will show that the vertex $v_{I,J}(\varpi_i)$ is the same as that determined by the simple subLevi containing node $i$. 

\begin{lemma}\label{simpleton}
	Suppose that the Levi subdiagram corresponding to $I$ breaks up into simple (i.e., connected) parts corresponding to $I_1,\hdots,I_m$, ordered so that $i \in I_1$. 
	Then 
	$$
	v_{I,J}(\varpi_i) = v_{I_1, I_1^c}(\varpi_i).
	$$
\end{lemma}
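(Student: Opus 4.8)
The plan is to use the averaging formula of Proposition \ref{avg} together with the fact that the Weyl group $W_I$ of a disconnected Dynkin subdiagram factors as a direct product over its connected components. Concretely, since $I = I_1 \sqcup \cdots \sqcup I_m$ with the $I_t$ pairwise non-adjacent in the Dynkin diagram, we have $W_I \cong W_{I_1} \times \cdots \times W_{I_m}$, and each factor $W_{I_t}$ for $t \ge 2$ acts trivially on $\varpi_i$ because $\varpi_i$ is orthogonal to every simple root $\alpha_k$ with $k \in I_t$ (as $i \notin I_t$ and $\langle \varpi_i, \alpha_k^\vee\rangle = \delta_{ik}$).

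First I would invoke Proposition \ref{avg} to write
$$
v_{I,J}(\varpi_i) = \frac{1}{|W_I|}\sum_{w\in W_I} w\varpi_i.
$$
Next I would use the product decomposition $W_I = W_{I_1}\times W_{I_1^c \cap I}$, where $I_1^c \cap I = I_2 \sqcup \cdots \sqcup I_m$, writing a general element $w \in W_I$ uniquely as $w = w_1 w'$ with $w_1 \in W_{I_1}$ and $w' \in W_{I_2\sqcup\cdots\sqcup I_m}$. Since these subgroups commute and $w'\varpi_i = \varpi_i$ for all such $w'$, each term satisfies $w\varpi_i = w_1 w'\varpi_i = w_1\varpi_i$. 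Therefore
$$
\frac{1}{|W_I|}\sum_{w\in W_I} w\varpi_i = \frac{1}{|W_{I_1}|\cdot|W_{I_2\sqcup\cdots\sqcup I_m}|}\sum_{w_1\in W_{I_1}}\sum_{w'} w_1\varpi_i = \frac{1}{|W_{I_1}|}\sum_{w_1\in W_{I_1}} w_1\varpi_i,
$$
which is exactly $v_{I_1, I_1^c}(\varpi_i)$ by another application of Proposition \ref{avg} (noting $I_1 \sqcup I_1^c = \{1,\dots,r\}$).

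I expect there to be essentially no serious obstacle here: the only point requiring a moment's care is the claim that $W_I$ decomposes as the direct product of the $W_{I_t}$ and that the factors for $t\ge 2$ fix $\varpi_i$. The former is the standard fact that simple reflections attached to mutually orthogonal (non-adjacent) nodes commute, so the subgroup they generate is the internal direct product of the parabolic subgroups of the connected pieces; the latter is immediate from $\langle\varpi_i,\alpha_k^\vee\rangle = 0$ for $k\ne i$. One should also remark that the index $J$ plays no role in the value $v_{I,J}(\varpi_i)$ beyond pinning down the solution — Proposition \ref{avg} already expresses $v_{I,J}$ purely in terms of $I$ and $\lambda$ — so the notational switch from $J$ on the left to $I_1^c$ on the right is harmless. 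This lemma then reduces the enumeration to simple Levi subdiagrams containing node $i$, setting up the inductive step toward Theorem \ref{maine}.
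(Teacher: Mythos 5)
Your proposal is correct and follows essentially the same route as the paper: both apply the averaging formula of Proposition \ref{avg}, factor $W_I = W_{I_1}\times W_{I_2\sqcup\cdots\sqcup I_m}$, and use that the second factor fixes $\varpi_i$ to collapse the double sum to an average over $W_{I_1}$. No gaps.
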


\begin{proof}
Let $I' = \bigsqcup_{k=2}^m I_k$, so $I = I_1\sqcup I'$ and $W_I$ is the direct product of $W_{I_1}$ and $W_{I'}$. Therefore 
\begin{align*}
\frac{1}{|W_I|}\sum_{w\in W_I} w\varpi_i &= \frac{1}{|W_{I_1}||W_{I'}|}\sum_{u\in W_{I_1}} \sum_{v\in W_{I'}}uv\varpi_i\\
&= \frac{1}{|W_{I_1}||W_{I'}|}\sum_{u\in W_{I_1}} \sum_{v\in W_{I'}}u\varpi_i\\
&= \frac{1}{|W_{I_1}|}\sum_{u\in W_{I_1}}u\varpi_i.
\end{align*}
\end{proof}

Thus when parametrizing the vertices $v_{I,I^c}(\varpi_i)$, we need only consider the following cases: in the first case, $i \notin I$, which from Lemma \ref{mt} is just as good as $I = \emptyset$; in the second case, $i\in I$, and Lemma \ref{simpleton} lets us reduce this to $i\in I$ such that the subdiagram for $I$ is connected.

\begin{lemma}
Suppose $I\subseteq \{1,\hdots,r\}$ and that the subdiagram for $I$ is connected. Express 
$$
\varpi_i-v_{I,I^c}(\varpi_i) = \sum_{m\in I} c_m\alpha_m.
$$
Then each $c_m>0$. 
\end{lemma}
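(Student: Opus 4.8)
The plan is to avoid any heavy machinery and instead deduce everything from the averaging formula of Proposition~\ref{avg} together with a short propagation argument along the connected diagram $I$. In effect the assertion is the classical positivity of the entries of an inverse Cartan matrix of a simple root system, but it is just as quick to argue directly. Recall that in the situation of the lemma we are in the case $i\in I$.

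First I would apply Proposition~\ref{avg} to write $v := v_{I,I^c}(\varpi_i) = \frac{1}{|W_I|}\sum_{w\in W_I} w\varpi_i$, whence
\[
\varpi_i - v = \frac{1}{|W_I|}\sum_{w\in W_I}\bigl(\varpi_i - w\varpi_i\bigr).
\]
As noted in the proof of Proposition~\ref{avg}, each $\varpi_i - w\varpi_i$ ($w\in W_I$) is a nonnegative integral combination of the simple roots $\alpha_m$, $m\in I$; averaging, we obtain $c_m \ge 0$ for all $m\in I$. Also from that proof, $s_m v = v$ for every $m\in I$, so $\langle v,\alpha_m^\vee\rangle = 0$ for all $m\in I$.

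Next I would pair the identity $\varpi_i = v + \sum_{m\in I} c_m\alpha_m$ against $\alpha_{m_0}^\vee$ for each $m_0 \in I$. Using $\langle v,\alpha_{m_0}^\vee\rangle = 0$, $\langle\varpi_i,\alpha_{m_0}^\vee\rangle = \delta_{i,m_0}$, and $\langle\alpha_{m_0},\alpha_{m_0}^\vee\rangle = 2$, this yields
\[
\delta_{i,m_0} = 2c_{m_0} + \sum_{m\in I,\ m\ne m_0} c_m\,\langle\alpha_m,\alpha_{m_0}^\vee\rangle ,
\]
in which every summand is $\le 0$ because $c_m\ge 0$ and $\langle\alpha_m,\alpha_{m_0}^\vee\rangle\le 0$ for $m\ne m_0$. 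Setting $m_0 = i$ forces $2c_i\ge 1$, hence $c_i>0$. Setting $m_0\in I\setminus\{i\}$ gives $2c_{m_0} = \sum_{m\ne m_0}c_m\bigl(-\langle\alpha_m,\alpha_{m_0}^\vee\rangle\bigr)\ge 0$, a sum of nonnegative terms; in particular, if $c_{m_0}=0$ then $c_m=0$ for every node $m\in I$ adjacent to $m_0$ in the Dynkin subdiagram of $I$ (adjacency being the symmetric relation $\langle\alpha_m,\alpha_{m_0}^\vee\rangle\ne 0$).

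Finally, suppose for contradiction that $c_{m_1}=0$ for some $m_1\in I$. Since the subdiagram on $I$ is connected, pick a path $m_1 = p_0, p_1,\dots,p_k = i$ of consecutively adjacent nodes in $I$; the vanishing $c_{p_0}=0$ then propagates step by step to $c_{p_k}=c_i=0$, contradicting $c_i>0$. Hence $c_m>0$ for all $m\in I$. I do not expect any serious obstacle here: the only points requiring care are keeping track of the sign convention $\langle\alpha_m,\alpha_{m_0}^\vee\rangle\le 0$ for $m\ne m_0$, the symmetry of the adjacency relation, and—when later matching the tuple $(c_m)$ with the statement of Theorem~\ref{maine}—placing the transpose of the (Levi) Cartan matrix correctly.
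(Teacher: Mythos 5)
Your argument is correct, but it follows a different route from the paper's. Both proofs start from Proposition \ref{avg}: the paper also writes $\varpi_i-v_{I,I^c}(\varpi_i)$ as the average of the nonnegative expressions $\varpi_i-w\varpi_i$, $w\in W_I$. From there the paper finishes in one stroke by exhibiting a single Weyl element with full support, namely $w=s_{\theta_I}$, the reflection in the highest root $\theta_I$ of the subsystem on $I$: since $i\in I$ gives $\langle\varpi_i,\theta_I^\vee\rangle>0$, the term $\varpi_i-s_{\theta_I}\varpi_i=\langle\varpi_i,\theta_I^\vee\rangle\,\theta_I$ is positively supported on every $\alpha_m$, $m\in I$, because the highest root of a connected diagram has all simple roots in its support. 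You instead use only the defining equations $\langle v,\alpha_m^\vee\rangle=0$ ($m\in I$) together with $c_m\ge 0$, pair against each $\alpha_{m_0}^\vee$, and propagate vanishing along a path in the connected diagram to contradict $c_i>0$; this is in effect a self-contained proof that the entries of the relevant column of the inverse transpose Cartan matrix of $I$ are strictly positive (with $c_i\ge \tfrac12$ as a bonus). Your version is more elementary, avoiding the highest root and its full-support property, at the cost of a longer sign-chasing argument; the paper's is shorter but leans on that standard structural fact. One small point of care you already handle: the hypothesis $i\in I$ is implicit in the lemma (it is the case to which Lemmas \ref{mt} and \ref{simpleton} reduce), and both proofs genuinely need it.
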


\begin{proof}
Note that $\varpi_i-v_{I,I^c}(\varpi_i)$ is the average of the terms $\varpi_i-w\varpi_i$, for $w\in W_I$. Each such term is a nonnegative combination of simple roots. So it suffices to find a single $w$ such that $\varpi_i-w\varpi_i$ is a positive combination of the simple roots $\alpha_m$ for $m\in I$. 

Let $\theta_I$ denote the highest root of the sub-root system given by $I$. Since $\varpi_i(\theta_I^\vee)>0$, $\varpi_i-s_{\theta_I}\varpi_i = \varpi_i(\theta_I^\vee)\theta_I$ is positively supported on every simple root $\alpha_m$, for $m\in I$. 
\end{proof}

\begin{corollary}\label{distinct}
	Let $I_1$ and $I_2$ be distinct subsets of $\{1,\hdots,r\}$, each containing $i$ and each with connected Dynkin subdiagram. Then $v_{I_1,I_1^c}(\varpi_i)\neq v_{I_2,I_2^c}(\varpi_i)$. 
\end{corollary}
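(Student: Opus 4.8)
\textbf{Proof proposal for Corollary \ref{distinct}.}

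The plan is to argue by contradiction, exploiting the previous lemma, which tells us that for a connected subdiagram $I$ containing $i$, the difference $\varpi_i - v_{I,I^c}(\varpi_i)$ is a combination of the simple roots $\alpha_m$, $m \in I$, with \emph{strictly positive} coefficients. So suppose $I_1 \neq I_2$ are distinct connected subdiagrams, both containing $i$, and suppose $v_{I_1,I_1^c}(\varpi_i) = v_{I_2,I_2^c}(\varpi_i)$. Subtracting from $\varpi_i$, we get an equality
$$
\sum_{m \in I_1} c_m \alpha_m = \sum_{m \in I_2} c'_m \alpha_m
$$
where every $c_m > 0$ and every $c'_m > 0$. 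Since the full set of simple roots $\{\alpha_1,\dots,\alpha_r\}$ is linearly independent, the coefficient of each $\alpha_m$ must match on both sides. But if, say, $m \in I_1 \setminus I_2$ (which is nonempty since $I_1 \neq I_2$ and both are nonempty — both contain $i$ — so at least one of $I_1 \setminus I_2$, $I_2 \setminus I_1$ is nonempty), then the left side has a strictly positive coefficient $c_m$ on $\alpha_m$ while the right side has coefficient $0$, a contradiction. (If instead $I_2 \setminus I_1$ is the nonempty one, the symmetric argument applies.)

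Thus no such pair can exist, and the vertices are distinct. I do not anticipate any real obstacle here: the entire content is packaged in the preceding lemma (strict positivity of the support), and linear independence of the simple roots does the rest. The only point requiring a word of care is to note that $I_1$ and $I_2$ are each nonempty — guaranteed since both contain the index $i$ — so that $I_1 \triangle I_2 \neq \varnothing$ forces a genuine coefficient mismatch.

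A minor stylistic alternative would be to phrase this directly: the support (as a set of simple roots) of $\varpi_i - v_{I,I^c}(\varpi_i)$ equals $I$ exactly, by the lemma, so the map $I \mapsto v_{I,I^c}(\varpi_i)$ is injective on connected subdiagrams containing $i$ because one recovers $I$ as the support. This is the same argument, just stated contrapositively, and is perhaps cleaner to write.
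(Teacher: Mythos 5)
Your proposal is correct and is essentially the paper's own argument: the paper's proof is the one-line observation that, in the simple-root basis, $\varpi_i - v_{I_1,I_1^c}(\varpi_i)$ and $\varpi_i - v_{I_2,I_2^c}(\varpi_i)$ have different coefficients, which rests exactly on the strict positivity of the support established in the preceding lemma. Your contradiction/support-recovery phrasing just spells out the same reasoning in more detail.
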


\begin{proof}
Expressed in the basis of simple roots, $\varpi_i - v_{I_1,I_1^c}(\varpi_i)$ and $\varpi_i - v_{I_2,I_2^c}(\varpi_i)$ have different coefficients. 
\end{proof}	

Therefore we have just proved the following parametrization of the extremal rays of $\mathscr{K}(G)$. 

\begin{theorem}\label{BigTheorem}
The extremal rays of $\mathscr{K}(G)$ with first coordinate $\varpi_i$ coincide precisely with the set $\{(\varpi_i, v_{I,J}(\varpi_i))\}$, where either 
\begin{enumerate}[label=(\alph*)]  
 \item the Dynkin subdiagram for $I$ is connected and $i\in I$ (note $I = \{1,\hdots,r\}$ is one such choice),
 \item $I=\emptyset$, which corresponds to the extremal ray $(\varpi_i, \varpi_i).$ \\ \end{enumerate}
\end{theorem}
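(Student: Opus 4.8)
The plan is to assemble Theorem \ref{BigTheorem} as a direct corollary of the sequence of lemmas just established, after first reducing the enumeration of extremal rays to the list of vertices $v_{I,J}(\varpi_i)$. By Proposition \ref{firstcoord}, every extremal ray of $\mathscr{K}(G)$ is (up to scaling) of the form $(\varpi_i, v_{I,J}(\varpi_i))$ for some $i$ and some decomposition $I \sqcup J = \{1,\hdots,r\}$; conversely, by Proposition \ref{somerays}, every such pair does generate an extremal ray. So the entire content of the theorem is to identify which pairs $(I,J)$ give genuinely distinct vertices $v_{I,J}(\varpi_i)$, and to show every vertex is reached by one of the two listed normal forms. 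Since $J$ is forced to be $I^c$, I will write $v_{I,I^c}(\varpi_i)$ throughout and think of the problem as indexed by subsets $I$.

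First I would dispose of the case $i \notin I$: by Lemma \ref{mt}, $v_{I,I^c}(\varpi_i) = \varpi_i$, so all such choices collapse to the single vertex $\varpi_i$, which is exactly case (b) (equivalently, the choice $I = \emptyset$). Next, for $i \in I$ with $I$ possibly disconnected, Lemma \ref{simpleton} shows $v_{I,I^c}(\varpi_i) = v_{I_1, I_1^c}(\varpi_i)$ where $I_1$ is the connected component of (the subdiagram of) $I$ containing node $i$; hence every such vertex is already obtained from a \emph{connected} $I$ containing $i$, which is case (a). Combining these two reductions shows that the set $\{(\varpi_i, v_{I,J}(\varpi_i))\}$ over all $I \sqcup J = \{1,\hdots,r\}$ equals the set indexed by the normal forms in (a) and (b), so every extremal ray with first coordinate $\varpi_i$ appears in the claimed list.

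It remains to check there is no redundancy \emph{within} the list, i.e., that distinct normal forms give distinct rays. Since the first coordinate $\varpi_i$ is fixed, this amounts to showing the vertices themselves are distinct. The vertex in case (b) is $\varpi_i$ itself, while any vertex in case (a) satisfies $\varpi_i - v_{I,I^c}(\varpi_i) = \sum_{m\in I} c_m \alpha_m$ with all $c_m > 0$ by the preceding lemma (using $I \neq \emptyset$); in particular it is not equal to $\varpi_i$, so (a) and (b) never overlap. For two connected subsets $I_1 \neq I_2$ both containing $i$, Corollary \ref{distinct} gives $v_{I_1,I_1^c}(\varpi_i) \neq v_{I_2,I_2^c}(\varpi_i)$ directly, since the simple-root expansions of $\varpi_i - v_{I_k, I_k^c}(\varpi_i)$ are supported on $I_1$ and $I_2$ respectively and hence differ. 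Assembling these observations, the map from normal forms to extremal rays is a bijection onto the set of extremal rays with first coordinate $\varpi_i$, which is precisely the statement of Theorem \ref{BigTheorem}.

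Frankly, there is no real obstacle here: every ingredient has been proved in the preceding lemmas, and the theorem is a bookkeeping assembly. The only point requiring the slightest care is making sure the two reduction steps (Lemmas \ref{mt} and \ref{simpleton}) together with Propositions \ref{somerays} and \ref{firstcoord} genuinely cover \emph{all} pairs $(I,J)$ with $I \sqcup J = \{1,\hdots,r\}$ — in particular that the passage from a general $I$ to its component $I_1$ through $i$, or to $\emptyset$, does not accidentally drop any vertex — but this is immediate once one tracks the case split on whether $i \in I$ and whether $I$ is connected.
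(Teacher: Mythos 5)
Your proposal is correct and follows essentially the same route as the paper: the theorem is assembled from Propositions \ref{somerays} and \ref{firstcoord} (extremal rays are exactly the pairs $(\varpi_i, v_{I,J}(\varpi_i))$), the reductions of Lemmas \ref{mt} and \ref{simpleton} to the two normal forms, and the distinctness from the positivity lemma and Corollary \ref{distinct}. The only nitpick is that the positivity lemma really relies on $i\in I$ (so that $\varpi_i(\theta_I^\vee)>0$), not merely $I\neq\emptyset$, but this hypothesis is already guaranteed in your case (a), so nothing is lost.
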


Proposition \ref{firstcoord} and Theorem \ref{BigTheorem} taken together give a complete description of the extremal rays of $\mathscr{K}(G)$.

\section{Lifting Extremal Rays from Levi Subgroups}\label{levilifting}

In this section, we present a new framework in which to view the vertices $v_{I,J}(\lambda)$ or the extremal rays $(\varpi_i, v_{I,J}(\varpi_i))$. Specifically, for each $I$, we describe maps of polytopes and of Kostka cones for the two groups $L\subset G$, where $L$ is the (semisimple) Levi subgroup associated with $I$. It would be interesting to understand the extent to which this ``Levi induction'' is related to the Levi induction developed in \cite{Belkale,BKiers}. 

Let $L$ be a choice of semisimple Levi subgroup 
of $G$, and let $I=\Delta(L)$ be the corresponding collection of simple root indices. For such a simple Levi subgroup we can identify the simple roots in $\Phi(L)$ with a subset of the simple roots in $\Phi(G)$. We write $\overline{x}_i$ for the fundamental coweights of $L$. Thus for $i,j\in \Delta(L)$, $\langle \alpha_j,x_i\rangle_G = \langle \alpha_j,\overline{x}_i\rangle_L$. 

\begin{definition}
	Given a Levi subgroup $L \hookrightarrow G$ and a dominant integral weight $\lambda_L \in \Lambda_L^+$, we associate to $\lambda_L$ a dominant integral weight $\lambda \in \Lambda_G^+$, as follows: if $i \in \Delta(L)$ then $\langle \lambda, \alpha^{\vee}_i \rangle = \langle \lambda_L, \alpha_i^{\vee} \rangle$ and if $j \notin \Delta(L)$ then $\langle \lambda, \alpha_j^{\vee} \rangle =0$. We call this new weight $\lambda$ the \emph{extension by 0} of $\lambda_L$.
\end{definition}

 Let $(\lambda_L, \mu_L)$ be an element of $\mathscr{K}(L)$, and write
\[
\lambda_L-\mu_L = \sum_{k \in \Delta(L)} c_k\alpha_k .
\]

 Then define the weight $\lambda$ for $G$, extending $\lambda_L$ by zero on each $\alpha_k^\vee, k\not\in \Delta(L)$, and set 
 \begin{equation} 
 \mu:=\lambda-\sum_{k \in \Delta(L)} c_k \alpha_k ,
 \end{equation}
 as a weight for $G$. 
 
 \begin{definition}
 	Given a point $(\lambda_L, \mu_L) \in (\Lambda_L^+)^2$ for $\lambda_L=\sum_L a_i \varpi_i$ and $\mu_L=\lambda_L-\sum_L c_i \alpha_i|_L$, we write $\mathrm{Ind}_L^G(\lambda_L, \mu_L)=(\lambda, \mu)\in (\Lambda_G^+)^2$ where $\lambda = \sum a_i\varpi_i$ is the extension of $\lambda_L$ by 0 and $\mu=\lambda-\sum c_i \alpha_i$.
 \end{definition}
 
We now record, without proof, several straightforward properties of $\op{Ind}_L^G$. 
 
 \begin{proposition}\label{liftingprop}
 $ $ 

 \begin{enumerate}[label=(\alph*)]
 \item The map $\op{Ind}_L^G: (\Lambda_L^+)^2 \rightarrow (\Lambda^+_G)^2$ restricts to $\op{Ind}_L^G: \mathscr{K}(L)\to \mathscr{K}(G)$.
 \item For any two Levi subgroups $L'\subset L\subset G$, we have $\op{Ind}_L^G \circ \op{Ind}_{L'}^L = \op{Ind}_{L'}^G$. 
 \item Given a vertex $\mu_L = v_{I',J'}(\lambda_L)$ of $IP_{\lambda_L}$ for a Levi $L$, we have 
 $$
 \op{Ind}_L^G ( \lambda_L, v_{I',J'}(\lambda_L)) = (\lambda, v_{I,J}(\lambda)),
 $$
 where $\lambda$ is the extension of $\lambda_L$ by $0$ and $I = I'$, $J = I^c \subseteq\{1,\hdots,r\}$. 
 \end{enumerate}
 \end{proposition}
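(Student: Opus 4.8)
The plan is to verify the three assertions by unwinding definitions; all three are elementary, the only subtlety being a little bookkeeping about Levi subsystems. Write $I = \Delta(L) \subseteq \{1,\dots,r\}$, and keep in mind two facts used throughout: (i) for $i,j \in I$ the Cartan integer $\langle \alpha_j,\alpha_i^\vee\rangle$ is the same computed in $G$ or in $L$, and correspondingly $W_I$ acts identically on the span of $\{\alpha_k : k\in I\}$ in either root system; and (ii) the extension by $0$ is characterized by $\langle\lambda,\alpha_i^\vee\rangle = \langle\lambda_L,\alpha_i^\vee\rangle$ for $i\in I$ and $\langle\lambda,\alpha_j^\vee\rangle = 0$ for $j\notin I$. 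For (a) I would simply check Steinberg's inequalities from Proposition \ref{steinberg} for $(\lambda,\mu)$. Dominance of $\lambda$ is built into the extension by $0$. Since $(\lambda_L,\mu_L)\in\mathscr{K}(L)$, the coefficients $c_k$ in $\lambda_L-\mu_L=\sum_{k\in I}c_k\alpha_k$ are nonnegative; pairing $\lambda-\mu=\sum_{k\in I}c_k\alpha_k$ against $x_i$ gives $c_i\ge 0$ if $i\in I$ and $0$ otherwise, so $\langle\lambda-\mu,x_i\rangle\ge 0$. For dominance of $\mu$: when $i\in I$, $\langle\mu,\alpha_i^\vee\rangle = \langle\mu_L,\alpha_i^\vee\rangle\ge 0$ by (i)--(ii); when $j\notin I$, $\langle\mu,\alpha_j^\vee\rangle = -\sum_{k\in I}c_k\langle\alpha_k,\alpha_j^\vee\rangle\ge 0$ because $k\ne j$ forces $\langle\alpha_k,\alpha_j^\vee\rangle\le 0$.

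For (b), the claim decomposes into two observations for $L'\subseteq L\subseteq G$. First, extension by $0$ is transitive: comparing pairings against each $\alpha_i^\vee$, split according to whether $i$ lies in $\Delta(L')$, in $\Delta(L)\setminus\Delta(L')$, or outside $\Delta(L)$, one sees that the extension to $G$ of the extension to $L$ of $\lambda_{L'}$ equals the extension to $G$ of $\lambda_{L'}$ directly. Second, by the very definition of $\op{Ind}$ the coefficient tuple $(c_k)_{k\in\Delta(L')}$ of $\lambda_{L'}-\mu_{L'}$ is carried unchanged by $\op{Ind}_{L'}^L$, so applying $\op{Ind}_L^G$ afterwards subtracts $\sum_{k\in\Delta(L')}c_k\alpha_k$ from the extended weight, which is precisely the output of $\op{Ind}_{L'}^G$.

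For (c), write the vertex as $\mu_L = v_{I',J'}(\lambda_L)$ with $I'\sqcup J' = \Delta(L)$ (Theorem \ref{verticestheorem} for $L$). By Proposition \ref{avg} applied in $L$, $\mu_L = \tfrac{1}{|W_{I'}|}\sum_{w\in W_{I'}}w\lambda_L$, so $\lambda_L-\mu_L$ is supported only on $\{\alpha_k : k\in I'\}$; hence $\mu = \lambda - \sum_{k\in I'}c_k\alpha_k$. Setting $I=I'$ and $J=I^c$, we have $I\sqcup J=\{1,\dots,r\}$, so by Proposition \ref{basis} the linear system defining $v_{I,J}(\lambda)$ has a unique solution, and it suffices to show $\mu$ solves it: for $i\in I'$, $\langle\mu,\alpha_i^\vee\rangle = \langle\mu_L,\alpha_i^\vee\rangle = 0$ since $\mu_L\in F_i^{\lambda_L}$ (using (i)--(ii) for the first equality); and for $j\notin I'$, $\langle\lambda-\mu,x_j\rangle = \sum_{k\in I'}c_k\langle\alpha_k,x_j\rangle = 0$. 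Therefore $\mu = v_{I,J}(\lambda)$, as claimed. Alternatively one can compare the two averages $\tfrac{1}{|W_{I'}|}\sum_w w\lambda$ and $\tfrac{1}{|W_{I'}|}\sum_w w\lambda_L$ directly, using (i) to see that for $w\in W_{I'}$ the simple-root coefficients of $\lambda-w\lambda$ depend only on the $I'$-Cartan data and the integers $\langle\lambda,\alpha_i^\vee\rangle=\langle\lambda_L,\alpha_i^\vee\rangle$, $i\in I'$, hence coincide with those of $\lambda_L-w\lambda_L$.

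None of the three parts is deep; the only thing requiring care is the bookkeeping around (i)--(ii), i.e. consistently treating Cartan integers and $W_I$-actions on simple-root coefficients as intrinsic to the subsystem $I$, and remembering that ``extension by $0$'' is characterized by its pairings with the simple coroots. The step most worth double-checking is the one in (c) identifying $\mu$ as the solution of exactly the system cut out by the facets $F_i^\lambda$ ($i\in I'$) and $E_j^\lambda$ ($j\notin I'$); once that is pinned down, uniqueness via Proposition \ref{basis} finishes the argument.
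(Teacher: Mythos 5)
Your verification is correct, and there is nothing to compare it against in detail: the paper records Proposition \ref{liftingprop} explicitly ``without proof'' as a collection of straightforward properties, and your argument is exactly the routine check that is being left to the reader --- Steinberg's inequalities for (a), transitivity of extension by $0$ together with invariance of the root coefficients for (b), and for (c) the identification of $\mu$ as the unique solution (via Proposition \ref{basis}) of the system cut out by $F_i^\lambda$, $i\in I'$, and $E_j^\lambda$, $j\notin I'$. Your bookkeeping of the Cartan integers and fundamental coweights across the inclusion $L\subset G$ is the only point of substance, and you handle it correctly.
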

 
Thus we observe that, if $\lambda$ is the extension by 0 associated to $\lambda_L$ for some simple Levi $L \hookrightarrow G$, then whenever $I$ is contained in the Dynkin subdiagram defining $L$, the vertex $v_{I,J}(\lambda)$ of $IP_{\lambda}$ is in fact induced from a vertex of $IP_{\lambda_L}$ for $L$. 

Furthermore, whenever we lift a vertex $v_{I_L, J_L}(\lambda_L)$ from a Levi subgroup it always lifts to $v_{I_L, I_L^c}(\lambda)$, so by the compatibility in Proposition \ref{liftingprop}(b), we can make the following notational simplification.
\begin{notation}
	We write $v_{\Delta(L)}(\lambda)$ for a vertex of $IP_\lambda$ lifted from Levi $L$, where $L$ is the smallest possible such Levi. 
\end{notation}

Before examining the situation with non-simple Levi subgroups, we review the notion of a direct sum of cones.

\begin{definition}
If $C_1, C_2$ are rational semigroups inside ambient vector spaces $W_1,W_2$, we may form the direct sum $C_1\oplus C_2 = \{(c_1,c_2)|c_1\in C_1, c_2\in C_2\}\subseteq W_1\oplus W_2$, which is again a semigroup under $(c_1,c_2)+(d_1,d_2)=(c_1+d_1,c_2+d_2)$ and admits scalar multiplication by nonnegative rational numbers $q\cdot (c_1,c_2) = (qc_1,qc_2)$. Furthermore, if $C_1$ and $C_2$ are convex rational cones (i.e., defined by a finite system of rational linear inequalities), so is $C_1\oplus C_2$ (defined by the union of the inequalities for $C_1$ and $C_2$). 
\end{definition}

\begin{lemma}
	Let $L$ be a non-simple Lie group, $\displaystyle L= \prod L_i$ where each $L_i$ is simple. Then $\mathscr{K}(L)=\bigoplus \mathscr{K}(L_i)$.
\end{lemma}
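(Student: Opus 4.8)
The plan is to reduce to the linear-inequality description of the Kostka cone from Proposition~\ref{steinberg}, using that every piece of combinatorial data attached to $L=\prod_i L_i$ decomposes along the factors. Write $T=\prod_i T_i$ for the maximal torus, so that $X^*(T)_\Q=\bigoplus_i X^*(T_i)_\Q$ and $X_*(T)_\Q=\bigoplus_i X_*(T_i)_\Q$ compatibly with $\langle\,,\rangle$, the pairing between distinct factors being zero. The simple-root index set $\{1,\dots,r\}$ of $L$ is partitioned as $\bigsqcup_i\Delta(L_i)$; for a node $m\in\Delta(L_i)$ the simple coroot $\alpha_m^\vee$ and the fundamental coweight $x_m$ both lie in the summand $X_*(T_i)_\Q$, and a weight of $T$ is dominant if and only if each of its $T_i$-components is dominant.

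First I would apply Proposition~\ref{steinberg} to $L$: writing $\lambda=(\lambda_i)_i$ and $\mu=(\mu_i)_i$, the pair $(\lambda,\mu)$ lies in $\mathscr{K}(L)$ precisely when, for every $m\in\{1,\dots,r\}$, the three inequalities $\langle\lambda-\mu,x_m\rangle\ge 0$, $\langle\lambda,\alpha_m^\vee\rangle\ge 0$, $\langle\mu,\alpha_m^\vee\rangle\ge 0$ hold. When $m\in\Delta(L_i)$, orthogonality of distinct factors makes these equal to $\langle\lambda_i-\mu_i,x_m\rangle$, $\langle\lambda_i,\alpha_m^\vee\rangle$, $\langle\mu_i,\alpha_m^\vee\rangle$ respectively, so each of the three depends only on the $i$-th components. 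Hence the system of inequalities cutting out $\mathscr{K}(L)$ is precisely the union, over $i$, of the systems (again via Proposition~\ref{steinberg}) cutting out $\mathscr{K}(L_i)$ inside $(X^*(T_i)_\Q)^2$. By the definition of $C_1\oplus C_2$ recalled just above the lemma, this is exactly the assertion $\mathscr{K}(L)=\bigoplus_i\mathscr{K}(L_i)$.

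As a cross-check, or an alternative route avoiding Proposition~\ref{steinberg}, one can argue representation-theoretically: an irreducible representation of $\prod_i L_i$ is an outer tensor product $V_\lambda=\bigotimes_i V_{\lambda_i}$ with weight-space decomposition $V_\lambda(\mu)=\bigotimes_i V_{\lambda_i}(\mu_i)$, which is nonzero if and only if every factor $V_{\lambda_i}(\mu_i)$ is nonzero. This identifies the underlying integral semigroups of $\mathscr{K}(L)$ and $\bigoplus_i\mathscr{K}(L_i)$; passing to rational cones is harmless because the operation $C\mapsto C\otimes_{\mathbb{Z}^{\ge 0}}\mathbb{Q}^{\ge 0}$ commutes with finite direct sums of semigroups containing $0$ (write an element as the sum of its components, each supported in a single factor, and scale componentwise).

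I do not expect a genuine obstacle here; the content is bookkeeping. The one point deserving a sentence of care is exactly that last compatibility — that the rational cone generated by $\bigoplus_i\mathscr{K}(L_i)$ coincides with the direct sum of the rational cones $\mathscr{K}(L_i)$ — together with remembering that $L$ is only semisimple, so $\mathscr{K}(L)$ and $\mathscr{K}(L_i)$ should be read through the inequalities of Proposition~\ref{steinberg}, which make sense for any semisimple group (as already used in the proof of Lemma~\ref{iPrime}). Working with the inequalities directly, as in the second paragraph above, sidesteps the issue, which is why I would make that the primary argument.
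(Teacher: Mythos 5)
Your argument is correct, but your primary route differs from the paper's. The paper's proof is exactly your ``cross-check'': it observes that $\Phi(L)=\bigsqcup\Phi(L_i)$ and that every irreducible representation of $L$ factors as an outer tensor product $V_{\lambda_L}=\bigboxtimes V_{\lambda_{L_i}}$ with each simple factor acting only on its own tensor factor, so weight spaces factor and $V_{\lambda}(\mu)\ne 0$ if and only if each $V_{\lambda_i}(\mu_i)\ne 0$; the cone statement follows at once. Your main argument instead runs through the facet description of Proposition~\ref{steinberg}, noting that each inequality $\langle\lambda-\mu,x_m\rangle\ge 0$, $\langle\lambda,\alpha_m^\vee\rangle\ge 0$, $\langle\mu,\alpha_m^\vee\rangle\ge 0$ involves only the component belonging to the factor containing node $m$, so the defining system for $\mathscr{K}(L)$ is the union of the systems for the $\mathscr{K}(L_i)$, which is literally the definition of the direct sum of cones given just before the lemma. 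That route makes the polyhedral statement immediate, but, as you yourself flag, it leans on the Steinberg criterion being valid for the semisimple group $L$, whereas Proposition~\ref{steinberg} is stated in the paper for simple, simply-connected $G$; the cleanest justification of that extension is precisely the outer-tensor-product factorization, which is why the paper's representation-theoretic argument is the more self-contained one (it identifies the integral semigroups directly, and passing to rational cones commutes with finite direct sums, as you note). Either way the proof is sound; if you keep the inequality version as primary, add the one sentence justifying the semisimple case of the criterion, or simply promote your second paragraph to the main argument.
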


\begin{proof}
	If $L$ is non-simple then the root system $\Phi(L)= \bigsqcup \Phi(L_i)$. For any finite-dimensional representation $V_{\lambda_L}$ we have $V_{\lambda_L}=\bigboxtimes V_{\lambda_{L_i}}$; here each simple factor $L_i$ acts only on $V_{\lambda_{L_i}}$. The statement on Kostka cones follows. 
\end{proof}

\begin{remark}
In describing $\mathscr{K}(G)$, this allows us to consider only simple $G$, as was assumed in the introduction. 
\end{remark}

\begin{proposition}\label{levisum}
	Let $\displaystyle L= \prod L_k$ be a direct product of simple Levi subgroups. For each $k$, suppose we are given a dominant weight $\lambda_{L_k}$ and a vertex $\mu_{L_k} = v_{I_k,I_k^c}(\lambda_{L_k})$. Set $I = \bigcup I_k$. 
	Then \[\sum \mathrm{Ind}_{L_i}^{G}(\lambda_{L_i}, \mu_{L_i})=\left(\sum \lambda_i, v_{I,I^c}\left(\sum \lambda_i\right)\right)\] where each $\lambda_i$ is the extension by $0$ of $\lambda_{L_i}$.
\end{proposition}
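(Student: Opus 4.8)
The plan is to reduce the identity to the averaging formula of Proposition \ref{avg} and then exploit the orthogonality built into a direct product of Levi subgroups. Write $\lambda := \sum_i \lambda_i$ for the common first coordinate of both sides; since $\mathrm{Ind}_{L_i}^G(\lambda_{L_i},\mu_{L_i})$ has first coordinate $\lambda_i$, summing over $i$ shows the first coordinates already agree. For the second coordinates, Proposition \ref{liftingprop}(c) identifies the second coordinate of $\mathrm{Ind}_{L_i}^G(\lambda_{L_i},\mu_{L_i})$ with $v_{I_i,I_i^c}(\lambda_i)$, where now $I_i^c$ is the complement of $I_i$ inside $\{1,\hdots,r\}$ rather than inside $\Delta(L_i)$. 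Thus the whole statement reduces to the weight identity
\[
\sum_i v_{I_i,I_i^c}(\lambda_i) \;=\; v_{I,I^c}(\lambda), \qquad I = \bigcup_i I_i .
\]

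To prove this, I would apply Proposition \ref{avg} to every term, rewriting the right-hand side as $\tfrac{1}{|W_I|}\sum_{w\in W_I} w\lambda$. The structural input is that $L=\prod_k L_k$ being a \emph{direct} product of Levi subgroups forces the subsets $\Delta(L_k)$ to be pairwise non-adjacent in the Dynkin diagram of $G$, i.e. $\langle\alpha_a,\alpha_b^\vee\rangle = 0$ whenever $a\in\Delta(L_k)$, $b\in\Delta(L_{k'})$ with $k\ne k'$. Consequently the $I_k\subseteq\Delta(L_k)$ are mutually orthogonal, so $W_I = \prod_k W_{I_k}$ as an internal direct product; and since $\lambda_j$ is the extension by $0$ of $\lambda_{L_j}$, we have $\langle\lambda_j,\alpha_i^\vee\rangle = 0$ for every $i\notin\Delta(L_j)$, whence $W_{I_k}$ fixes $\lambda_j$ pointwise for $k\ne j$. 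Substituting $\lambda=\sum_j\lambda_j$ and $w=\prod_k w_k$ into $\tfrac{1}{|W_I|}\sum_w w\lambda$ and using $w\lambda_j = w_j\lambda_j$, the sum factors over the $W_{I_k}$ and collapses to $\sum_j \tfrac{1}{|W_{I_j}|}\sum_{w_j\in W_{I_j}} w_j\lambda_j$, which by Proposition \ref{avg} once more is precisely $\sum_j v_{I_j,I_j^c}(\lambda_j)$. This establishes the displayed identity and hence the Proposition.

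The main obstacle is not computational but bookkeeping: one must verify carefully that $W_I = \prod_k W_{I_k}$ is a legitimate factorization (commuting simple Levi factors give mutually orthogonal root subsystems) and that the extension-by-$0$ weight $\lambda_j$ really is $W_{I_k}$-invariant for $k\ne j$. A secondary point requiring attention is keeping straight the two meanings of ``complement'' — inside $\Delta(L_i)$ for the input vertex $\mu_{L_i}$ versus inside $\{1,\hdots,r\}$ for the output vertex — which is exactly where Proposition \ref{liftingprop}(c) is invoked. Once these are in place, the collapse of the $W_I$-average into a sum of $W_{I_j}$-averages is immediate.
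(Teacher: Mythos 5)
Your proposal is correct and is essentially the paper's intended argument: the paper proves this proposition by noting it follows from the same averaging-formula computation as Lemma \ref{simpleton}, i.e.\ factoring $W_I=\prod_k W_{I_k}$ (the factors are orthogonal since $L$ is a direct product of Levis), using that each extension-by-zero $\lambda_j$ is fixed by $W_{I_k}$ for $k\ne j$, and collapsing the $W_I$-average of $\sum_j\lambda_j$ into the sum of $W_{I_j}$-averages via Proposition \ref{avg}. Your write-up just makes explicit the bookkeeping (Proposition \ref{liftingprop}(c) and the two meanings of complement) that the paper leaves implicit.
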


\begin{proof}
	This follows by an argument very similar to the proof of Lemma \ref{simpleton}.
\end{proof}

With the notation of Levi induction, the extremal rays of $\mathscr{K}(G)$ take on a particularly nice form. 

\begin{corollary}\label{lift0}
 Let $L$ be a simple Levi with $i \in \Delta(L)$. The extremal ray $(\varpi_i, v_{\Delta(L)}(\varpi_i))=\mathrm{Ind}_{L}^{G}(\varpi_i^L, 0)$.
\end{corollary}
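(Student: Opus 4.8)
The plan is to unwind the definition of $\op{Ind}_L^G$ on the specific input $(\varpi_i^L,0)$ and match it against the already-established description of the vertex $v_{\Delta(L)}(\varpi_i)$. The first step is to observe that $0$ is a genuine vertex of $IP_{\varpi_i^L}$ for $L$: writing $I'=\Delta(L)$ and $J'=\varnothing$, the system defining $v_{I',J'}(\varpi_i^L)$ reads $\langle x,\alpha_k^\vee\rangle=0$ for all $k\in\Delta(L)$, whose only solution is $x=0$ since $L$ is semisimple; and indeed $0\in IP_{\varpi_i^L}$ because $\langle\varpi_i^L,\overline{x}_j\rangle\ge 0$ for every $j\in\Delta(L)$ by Lemma \ref{prakash} applied to $L$. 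By Lemma \ref{iPrime} (applied to $L$) this $v_{\Delta(L),\varnothing}(\varpi_i^L)$ is a vertex, and by Proposition \ref{avg} it equals $\frac{1}{|W_L|}\sum_{w\in W_L}w\varpi_i^L$, which is $W_L$-invariant and hence $0$. Thus $(\varpi_i^L,0)=(\varpi_i^L,v_{\Delta(L),\varnothing}(\varpi_i^L))\in\mathscr{K}(L)$.

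Next I would invoke Proposition \ref{liftingprop}(c) with this vertex: it gives
$$
\op{Ind}_L^G(\varpi_i^L,0)=\op{Ind}_L^G\bigl(\varpi_i^L,v_{\Delta(L),\varnothing}(\varpi_i^L)\bigr)=\bigl(\lambda,\,v_{\Delta(L),\Delta(L)^c}(\lambda)\bigr),
$$
where $\lambda$ is the extension by $0$ of $\varpi_i^L$. The remaining purely bookkeeping step is to identify $\lambda$ and the vertex. For $\lambda$: by definition of extension by $0$, $\langle\lambda,\alpha_j^\vee\rangle=\langle\varpi_i^L,\alpha_j^\vee\rangle=\delta_{ij}$ for $j\in\Delta(L)$ and $\langle\lambda,\alpha_j^\vee\rangle=0$ for $j\notin\Delta(L)$, so $\langle\lambda,\alpha_j^\vee\rangle=\delta_{ij}$ for all $j\in\{1,\dots,r\}$, i.e.\ $\lambda=\varpi_i$. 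For the vertex: since $L$ is simple, $\Delta(L)$ is a connected subdiagram containing $i$, so by Corollary \ref{distinct} the pair $(\Delta(L),\Delta(L)^c)$ is the minimal data producing this vertex, whence $v_{\Delta(L),\Delta(L)^c}(\varpi_i)=v_{\Delta(L)}(\varpi_i)$ in the notation introduced before the statement.

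Combining, $\op{Ind}_L^G(\varpi_i^L,0)=(\varpi_i,v_{\Delta(L)}(\varpi_i))$; that this pair spans an extremal ray of $\mathscr{K}(G)$ is case (a) of Theorem \ref{BigTheorem}, since $\Delta(L)$ is connected and contains $i$. I do not anticipate a serious obstacle here: the only place requiring genuine care is verifying that $0$ is a vertex (rather than merely a point) of $IP_{\varpi_i^L}$ and that it is the correct $v_{I',J'}$, together with the ``smallest Levi'' bookkeeping in the last identification — everything else is a direct appeal to Proposition \ref{liftingprop}(c) and the definitions.
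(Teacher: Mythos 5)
Your argument is correct and follows the paper's own route: you identify $0$ as the vertex $v_{\Delta(L),\varnothing}(\varpi_i^L)$ of $IP_{\varpi_i^L}$ (the paper does this directly from $\langle 0,\alpha_k^\vee\rangle=0$, and also gives your averaging observation $\sum_{w\in W_L}w\lambda_L=0$ as an alternative) and then apply Proposition \ref{liftingprop}(c) together with the extension-by-zero bookkeeping. Your write-up is simply a more detailed version of the same proof, with the extra (harmless) verifications via Lemma \ref{iPrime}, Lemma \ref{prakash}, and Theorem \ref{BigTheorem}.
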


\begin{proof}
Simply note that $\langle 0, \alpha^{\vee}_k \rangle = 0$ for all $k \in I$, so $ 0 = v_{\Delta(L)}(\varpi_i^L)$. 

Alternatively, if $\lambda$ is the extension of $\lambda_L$ by zero, $(\lambda,v_{\Delta(L)}(\lambda))=\op{Ind}_{L}^G(\lambda_L,0)$ is clear from Proposition \ref{avg} and the observation that, in any root system $\Phi_L$ with any weight $\lambda_L$, 
$$
\sum_{w\in W_L} w\lambda_L = 0.
$$
\end{proof}

Suppose one would like to know the difference $\varpi_i - v_{\Delta(L)}(\varpi_i)$ in terms of simple roots (for example, this would help one to write down the extremal rays more quickly than using the averaging of Proposition \ref{avg}). This task is the same as expressing $\varpi_i^L - 0 $ in terms of simple roots for the Levi root system, by the previous corollary. But this is encoded by the transpose inverse Cartan matrix associated to $L$: the desired coefficients of the simple roots are the entries of the $i^\text{th}$ column of said matrix. 

\section{Detailed example in $C_4$}


In this section, we compute the extremal rays of the form $(\varpi_3, -)$ in type $G=C_4$. We will write down  lattice representatives of the extremal rays; they will be of the form $(k\varpi_3, v_{\Delta(L)}(k\varpi_3))$ for some integer $k$ (and where $3\in \Delta(L)$). Taking $k=det (C_L)$, where $C_L$ is the Cartan matrix for $L$, will always yield $k \varpi_i$ on the root lattice. Note however that $(det (C_L) \varpi_i^L, 0)$ may not be the \emph{first} integral point on the ray $(\varpi_i^L,0)$, as seen in case 4 below corresponding to Levi $\Delta(L)=\{2,3,4\}$.

In the examples below, $\Delta(L)$ are denoted by solid nodes, and a circle is placed around node $3$. The transpose inverse Cartan matrix of $L$ is given below the diagrams, with columns aligned with the corresponding simple roots of $L$. 

~\\

\begin{center}
\begin{dynkinDiagram}[root radius=.08cm, edge length=2cm, labels*={1,2,3,4}] C{oooo} \draw[black] (root 3) circle(.15cm); \end{dynkinDiagram} \vspace{1em}

The trivial Levi $\{e\}$ \vspace{1em}

$(\varpi_3, \varpi_3)$ \vspace{2em}

\end{center}

\begin{center}
\begin{dynkinDiagram}[root radius=.08cm, edge length=2cm, labels*={1,2,3,4}] C{oo*o} \draw[black] (root 3) circle(.15cm); \end{dynkinDiagram} \vspace{1em}

\hspace{1.6cm} $\frac{1}{2} \begin{pmatrix} 1 \end{pmatrix}$ \vspace{1em}

$(2 \varpi_3, v_{3, 124}(2\varpi_3))=(2\varpi_3, 2\varpi_2-\alpha_3)=(2\varpi_3, \varpi_2+\varpi_4)$ \vspace{2em}

\end{center}

\begin{center}
\begin{dynkinDiagram}[root radius=.08cm, edge length=2cm, labels*={1,2,3,4}] C{oo**} \draw[black] (root 3) circle(.15cm); \end{dynkinDiagram} \vspace{1em}

\hspace{3.6cm} $\frac{1}{2} \begin{pmatrix} 2 & \hspace{1.5cm} 2 \\ 1 & \hspace{1.5cm} 2 \end{pmatrix}$ \vspace{1em}

$(2 \varpi_3, v_{34, 12}(2\varpi_3))=(2\varpi_3, 2\varpi_3-2\alpha_3-\alpha_4)=(2\varpi_3,  2\varpi_2)$ \vspace{2em}
\end{center}

\begin{center}
\begin{dynkinDiagram}[root radius=.08cm, edge length=2cm, labels*={1,2,3,4}] C{o**o} \draw[black] (root 3) circle(.15cm); \end{dynkinDiagram} \vspace{1em}

\hspace{-.3cm}$\frac{1}{3} \begin{pmatrix} 2 & \hspace{1.5cm} 1 \\ 1 & \hspace{1.5cm} 2 \end{pmatrix}$ \vspace{1em}

$(3 \varpi_3, v_{23, 14}(3\varpi_3))=(3 \varpi_3, 3\varpi_3-\alpha_2-2\alpha_3)=(3 \varpi_3, \varpi_1+2 \varpi_4)$ \vspace{2em}
\end{center}

\begin{center}
\begin{dynkinDiagram}[root radius=.08cm, edge length=2cm, labels*={1,2,3,4}] C{o***} \draw[black] (root 3) circle(.15cm); \end{dynkinDiagram} \vspace{1em}

\hspace{1.7cm}$\frac{1}{2} \begin{pmatrix} 2 & \hspace{1.5cm} 2 & \hspace{1.5cm} 2  \\ 2 & \hspace{1.5cm} 4 & \hspace{1.5cm} 4 \\ 1 & \hspace{1.5cm} 2 & \hspace{1.5cm} 3 \end{pmatrix}$ \vspace{1em}

$(2 \varpi_3, v_{234, 1}(2\varpi_3))=(2\varpi_3, 2\varpi_3-2\alpha_2-4 \alpha_4-2\alpha_2)=(2 \varpi_3, 2 \varpi_1)$ \vspace{2em}
\end{center}

\begin{center}
\begin{dynkinDiagram}[root radius=.08cm, edge length=2cm, labels*={1,2,3,4}] C{***o} \draw[black] (root 3) circle(.15cm); \end{dynkinDiagram} \vspace{1em}

\hspace{-2.3cm}$\frac{1}{4} \begin{pmatrix} 3 & \hspace{1.5cm} 2 & \hspace{1.5cm} 1  \\ 2 & \hspace{1.5cm} 4 & \hspace{1.5cm} 2 \\ 1 & \hspace{1.5cm} 2 & \hspace{1.5cm} 3 \end{pmatrix}$ \vspace{1em}

$(4 \varpi_3, v_{123, 4}(4\varpi_3))=(4\varpi_3, 4\varpi_3-\alpha_1-2\alpha_2-3\alpha_3)=(4 \varpi_3, 3 \varpi_4)$ \vspace{2em}
\end{center}

\begin{center}
\begin{dynkinDiagram}[root radius=.08cm, edge length=2cm, labels*={1,2,3,4}] C{****} \draw[black] (root 3) circle(.15cm); \end{dynkinDiagram} \vspace{1em}

\hspace{-.3cm}$\frac{1}{2} \begin{pmatrix} 2 & \hspace{1.5cm} 2 & \hspace{1.5cm} 2 & \hspace{1.5cm} 2  \\ 2 & \hspace{1.5cm} 4 & \hspace{1.5cm} 4 & \hspace{1.5cm} 4 \\ 2 & \hspace{1.5cm} 4 & \hspace{1.5cm} 6 & \hspace{1.5cm} 6 \\ 1 & \hspace{1.5cm} 2 & \hspace{1.5cm} 3 & \hspace{1.5cm} 4 \end{pmatrix}$ \vspace{1em}

$(2 \varpi_3, v_{1234, \varnothing}(2\varpi_3))=(2 \varpi_3, 2\varpi_3-2 \alpha_1-4\alpha_2-6\alpha_3-3\alpha_4)=(2 \varpi_3, 0)$
\end{center}

\section{Polynomial Formulas for the Number of Extremal Rays}
We now give formulas for the number of extremal rays in $\mathscr{K}(G)$ for each type of simple Lie group $G$ as polynomials in the rank. In the classical setting of Kostka numbers (i.e., for $G=A_r$), these are due to Gao, Orelowitz, and Yong, along with the third author \cite{GKOY}. Our computations are based on the correspondence 
\begin{equation}\label{corresp}
\#\{\textit{extremal rays of the form } (k\varpi_i, v_{I, J}(k\varpi_i)) \} = 1+\#\{\textit{simple Levis } L \textit{ containing node } i\}
\end{equation}
as given by Theorem \ref{BigTheorem}. 

\begin{remark}\label{ADE} As we are looking at choices of simple Levi subgroups, the only influential factor is the underlying graph of the Dynkin diagram, and \textbf{not} the lacing of the diagram. Thus, we have that $G=A_r$, $B_r$, and $C_r$ will have the same number of extremal rays for each $r$, as will the pairs $A_2$ and $G_2$, and $A_4$ and $F_4$. We therefore reduce to computations for $A_r$, $D_r$, and $E_r$. We find it interesting that each polynomial begins with leading term $r^3/6$. 
\end{remark} 

We take as convention the numbering scheme of Bourbaki \cite{Bour} for the simple Dynkin diagrams. 
\begin{proposition} \begin{enumerate}[label=(\alph*)]
\item For $G$ of type $A_r$, the number of extremal rays in $\mathscr{K}(G)$ is $\binom{r+1}{3}+\binom{r+1}{2}+\binom{r+1}{1}-1$. 

\item For $G$ of type $D_r$, the number of extremal rays in $\mathscr{K}(G)$ is $\binom{r}{3}+3\binom{r}{2}+2\binom{r}{1}-3$.

\item For $G$ of type $E_r$, the number of extremal rays in $\mathscr{K}(G)$ is $\binom{r}{3}+4\binom{r}{2}+\binom{r}{1}-8$.

\end{enumerate}

\end{proposition}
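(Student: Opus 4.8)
The plan is to use the correspondence \eqref{corresp}, which says that the number of extremal rays is
\[
\sum_{i=1}^r \Bigl(1 + \#\{\text{simple Levis } L \text{ containing node } i\}\Bigr) = r + \sum_{i=1}^r \#\{\text{connected subdiagrams containing } i\}.
\]
Swapping the order of summation, $\sum_{i=1}^r \#\{\text{connected subdiagrams containing } i\} = \sum_{I \text{ connected}} |I|$, so the total count is $r + \sum_{I} |I|$ where $I$ ranges over all nonempty connected subdiagrams of the Dynkin diagram. Thus the entire problem reduces to the combinatorial task of computing $\sum_I |I|$ over connected subgraphs of the path $A_r$, the ``fork'' graph $D_r$, and the three graphs $E_6, E_7, E_8$ (viewing $E_r$ as a uniform family, which is the point of Remark \ref{ADE}).

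For type $A_r$ (a path on $r$ vertices), a connected subdiagram is just a subinterval $[a,b]$ with $1 \le a \le b \le r$, and $|I| = b-a+1$. So I would compute $\sum_{1\le a\le b\le r}(b-a+1) = \sum_{\ell=1}^r \ell(r-\ell+1)$, a standard sum that evaluates to $\binom{r+2}{3}$; adding the leftover $r$ and simplifying gives $\binom{r+1}{3}+\binom{r+1}{2}+\binom{r+1}{1}-1$ (one checks the two expressions agree, e.g. by Pascal's rule or by comparing values at small $r$). For type $D_r$, I would stratify the connected subdiagrams by whether they avoid the branch node or not: those contained in the long arm $A_{r-1}$, those contained in the other $A_{r-1}$ obtained by the other short leg, correcting for the doubly-counted path along the trunk, plus the subdiagrams that genuinely use the branch node and both short legs. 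Each class is again a sum of interval-length sums as in type $A$, so the computation is elementary inclusion–exclusion; collecting terms and rewriting in the binomial basis yields $\binom{r}{3}+3\binom{r}{2}+2\binom{r}{1}-3$. For type $E_r$, since there are only three diagrams, I would simply enumerate all connected subdiagrams of $E_6, E_7, E_8$ (again by the arm-stratification to keep the bookkeeping honest), record the three values of $r + \sum_I |I|$, and then exhibit a cubic polynomial through those three points, verifying it equals $\binom{r}{3}+4\binom{r}{2}+\binom{r}{1}-8$ at $r = 6,7,8$.

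The main obstacle is purely bookkeeping: for $D_r$ and especially for the three $E_r$ diagrams one must carefully count connected subgraphs of a branching tree without double-counting those that straddle the branch vertex, and weight each by its size. I would organize this by fixing, for each connected subdiagram, its unique ``topmost'' structure relative to the trivalent node — either it lies entirely in one of the three arms (a type-$A$ count), or it contains the trivalent node, in which case it is determined by how far it extends into each of the three arms independently (a product of three interval choices, with the size being the sum of the three extents plus one). This reduces everything to finite sums of polynomials in $r$ that I can evaluate in closed form. Finally, to present the answer in the stated binomial form rather than as an expanded cubic, I would verify the claimed identities either by the standard binomial recurrences or, most efficiently, by checking equality of two degree-$\le 3$ polynomials at four points.
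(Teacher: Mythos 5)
Your argument is correct, and it reaches the formulas by a genuinely different route than the paper. Both proofs start from the correspondence \eqref{corresp} coming from Theorem \ref{BigTheorem}, but you then apply the double-counting identity $\sum_{i}\#\{I\ni i\}=\sum_{I}|I|$ (over nonempty connected subdiagrams $I$) to rewrite the total as $r+\sum_I|I|$, and evaluate this weighted subdiagram count in closed form: for $A_r$ the interval sum $\sum_{\ell}\ell(r-\ell+1)=\binom{r+2}{3}$ plus $r$ indeed equals $\binom{r+1}{3}+\binom{r+1}{2}+\binom{r+1}{1}-1$, and your trivalent-node stratification for $D_r$ and $E_r$ (subdiagrams avoiding the branch node, versus those containing it and determined by their three arm extents) does reproduce the stated values --- e.g.\ it gives $27$ in $D_4$ and $78,118,168$ in $E_6,E_7,E_8$, matching the paper. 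The paper instead fixes the formulas and proves them by induction on the rank for types $A$ and $D$, computing the increments $R_{i,r+1}-R_{i,r}$ of ray counts under the standard embeddings $A_r\hookrightarrow A_{r+1}$, $D_r\hookrightarrow D_{r+1}$ from the Dynkin diagram, with base cases $A_1$ and $D_4$, and handles $E_6,E_7,E_8$ by direct computation. Your approach buys uniformity and a one-pass closed-form derivation (the identity ``number of rays $=r+\sum_I|I|$'' is a clean reformulation, and even the $E$ case is organized by the same arm stratification rather than brute force), at the cost of some explicit summation bookkeeping at the branch vertex; the paper's induction avoids evaluating any weighted sums directly but requires guessing the polynomials in advance and verifying base cases and increment counts. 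The only caveat is that your $D_r$ and $E_r$ computations are sketched rather than carried out; since the stratification is airtight (every connected subgraph of a tree either misses the trivalent node or is determined by its three arm extents) and the resulting sums do evaluate to the claimed binomial expressions, this is a matter of filling in routine arithmetic, not a gap in the argument.
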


\begin{proof}
\begin{enumerate}[label=(\alph*)]

\item We proceed by induction on $r$. Set $R_{i,r}$ to be the number of extremal rays of the form $(k\varpi_i, v_{I,J})$ for $A_r$. By the correspondence (\ref{corresp}), we have as the base case that $R_{1,1}=2$, which agrees with the formula (with the convention that $\binom{2}{3}=0$). Now suppose that the formula holds for $r$. Again using the correspondence (\ref{corresp}), we know that $R_{i,r+1}-R_{i,r}$ ($1 \leq i \leq r$) is precisely the number of new Levi subgroups in $A_{r+1}$ containing node $i$, under the usual embedding $A_r \hookrightarrow A_{r+1}$, and that $R_{r+1, r+1}=r+2$. As can easily be seen from the Dynkin diagram, we have for $1 \leq i \leq r$
$$
R_{i, r+1} -R_{i,r} = i.
$$
Therefore, we have 
$$
\begin{aligned} \sum_{i=1}^{r+1} R_{i,r+1} &= \left(\sum_{i=1}^{r} R_{i,r+1} \right) +R_{r+1, r+1} \\
&= \sum_{i=1}^r \left(R_{i,r}+i\right)+r+2 \\
&= \binom{r+1}{3}+\binom{r+1}{2}+\binom{r+1}{1}-1+\frac{r(r+1)}{2} + r+2\\
&= \binom{r+2}{3}+\binom{r+2}{2}+\binom{r+2}{1}-1
\end{aligned}
$$
as desired.

\item We again proceed by induction on $r$. Denote similarly in this case $R_{i,r}$. Then we have as a base case 
$$
R_{1,4}=R_{3,4}=R_{4,4}=6, \ R_{2,4}=9,
$$
which can be obtained using the correspondence (\ref{corresp}), and agrees with the formula. Now, choose the standard Levi embedding $D_r \hookrightarrow D_{r+1}$, noting that now node $i$ is labeled $i+1$. An investigation of the Dynkin diagram in this case gives $R_{1, r+1} = r+3$ and  
$$
R_{i,r+1} = \begin{cases} 
	R_{i-1,r}+((r+1-i)+2), & 2\leq i \leq r-1 \\ 
	R_{r-1,r}+2, & i=r \\
	 R_{r,r}+2, & i=r+1
	 \end{cases}
$$
Therefore, we have 
$$
\begin{aligned}
\sum_{i=1}^{r+1} R_{i,r+1} &= r+3 + \left(\sum_{i=2}^{r-1} R_{i,r+1}\right) +R_{r,r+1}+R_{r+1,r+1}\\
&= r+3+ \left(\sum_{i=2}^{r-1} R_{i-1,r} +r+3-i \right) +R_{r-1,r}+2+R_{r,r}+2 \\
&=r+7 +\sum_{i=1}^r R_{i,r} + \sum_{i=2}^{r-1} (r+3-i) \\
&= r+7 +\binom{r}{3}+3\binom{r}{2}+2\binom{r}{1}-3+\frac{1}{2}(r^2+3r-10)\\
&=\frac{r^3}{6}+\frac{3r^2}{2}+\frac{10r}{3}-1 \\
&= \binom{r+1}{3}+3\binom{r+1}{2}+2\binom{r+1}{1}-3
\end{aligned}
$$
as desired. Note that this formula is still valid for $D_3 = A_3$ and $D_2 = A_1\times A_1$.

\item This is done by direct computation. In particular, the number of extremal rays in types $E_6$, $E_7$, and $E_8$ are $78$, $118$, and $168$, respectively, which fits the formula. It should be noted that this also holds when making the associations $E_5=D_5$, $E_4=A_4$, and $E_3 = A_2\times A_1$.
\end{enumerate}
\end{proof}

\begin{bibdiv}
\begin{biblist}

\bib{Belkale}{article}{
	AUTHOR = {Belkale, P.},
	TITLE = {Extremal rays in the Hermitian eigenvalue problem},
	JOURNAL = {Math. Ann.},
	VOLUME = {373},
	PAGES = {1103--1133},
	YEAR = {2019}
}

\bib{BKiers}{article} {
	AUTHOR = {Belkale, P.},
	AUTHOR = {Kiers, J.},
	TITLE = {Extremal rays in the Hermitian eigenvalue problem for arbitrary types},
	JOURNAL = {Transform. Groups},
	YEAR = {2019},
	NOTE = {To appear.}
}

\bib{BZ}{article}{
	AUTHOR={Berenstein, A.D.}
	AUTHOR={Zelevinsky, A.V}
	TITLE={Triple multiplicities for $sl(r+1)$ and the spectrum of the exterior algebra of the adjoint representation}
	JOURNAL={Journal of Algebraic Combinatorics}
	VOLUME={1}
	YEAR={1992}
}

\bib{Bour}{book} {
	AUTHOR = {Bourbaki}
	TITLE ={Lie Groups and Lie Algebras, Chapters 4-6}
	SERIES = {Elements of Mathematics}
	YEAR = {2002}
	PUBLISHER = {Springer}
}

\bib{Ful}{book}{
	AUTHOR = {Fulton, W.}
	TITLE = {Young Tableaux}
	SERIES = {London Mathematical Society Student Texts}
	YEAR = {1997}
	PUBLISHER = {Cambridge University Press}
}

\bib{GKOY}{article}{
	AUTHOR = {Gao, S.},
	AUTHOR = {Kiers, J.},
	AUTHOR = {Orelowitz, G.},
	AUTHOR = {Yong, A.},
	TITLE = {The Kostka cone: Hilbert basis complexity and extremal rays},
	NOTE = {In Preparation}
}

\bib{Hoff}{article}{
	LABEL = {Hof53}
	AUTHOR = {U.S. Commerce Department},
	TITLE = {On an inequality of Hardy, Littlewood, and P\'olya},
	YEAR = {1953},
	JOURNAL = {Government Printing Office},
	NOTE = {By A.J. Hoffman. National Bureau of Standards Report 2977},
}

\bib{LusTits}{article}{
  title={The inverse of a Cartan matrix},
  author={Lusztig, G.}
  author={Tits, J.},
  year={1992}
}


\bib{St}{article}{
	AUTHOR={Stembridge, J.},
	TITLE={The Partial Order of Dominant Weights},
	JOURNAL={Advances in Mathematics}
	VOLUME={136}
	YEAR={1998}
}

\end{biblist}
\end{bibdiv}

%
%

\end{document}